\numberwithin{equation}{section}
\numberwithin{figure}{section}
\theoremstyle{plain}
\newtheorem{thm}{\protect\theoremname}[section]
\theoremstyle{remark}
\newtheorem{rem}[thm]{\protect\remarkname}
\theoremstyle{plain}
\newtheorem{prop}[thm]{\protect\propositionname}
\theoremstyle{plain}
\newtheorem{lem}[thm]{\protect\lemmaname}
\theoremstyle{definition}
\newtheorem{defn}[thm]{\protect\definitionname}
\DeclareMathOperator{\dist}{dist}
\providecommand{\definitionname}{Definition}
\providecommand{\lemmaname}{Lemma}
\providecommand{\propositionname}{Proposition}
\providecommand{\remarkname}{Remark}
\providecommand{\theoremname}{Theorem}
\begin{document}
\global\long\def\R{\mathbf{\mathbb{R}}}%
\global\long\def\C{\mathbf{\mathbb{C}}}%
\global\long\def\Z{\mathbf{\mathbb{Z}}}%
\global\long\def\N{\mathbf{\mathbb{N}}}%
\global\long\def\T{\mathbb{T}}%
\global\long\def\Im{\mathrm{Im}}%
\global\long\def\Re{\mathrm{Re}}%
\global\long\def\eval{\mathrm{eval}}%
\global\long\def\irr{\mathrm{irr}}%
\global\long\def\H{\mathcal{H}}%
\global\long\def\M{\mathbb{M}}%
\global\long\def\P{\mathcal{P}}%
\global\long\def\L{\mathcal{L}}%
\global\long\def\F{\mathcal{\mathcal{F}}}%
\global\long\def\s{\sigma}%
\global\long\def\Rc{\mathcal{R}}%
\global\long\def\W{\tilde{W}}%
\global\long\def\avg{\mathrm{avg}}%

\global\long\def\G{\mathcal{G}}%
\global\long\def\d{\partial}%
 
\global\long\def\jp#1{\langle#1\rangle}%
\global\long\def\norm#1{\|#1\|}%
\global\long\def\mc#1{\mathcal{\mathcal{#1}}}%

\global\long\def\Right{\Rightarrow}%
\global\long\def\Left{\Leftarrow}%
\global\long\def\les{\lesssim}%
\global\long\def\hook{\hookrightarrow}%

\global\long\def\D{\mathbf{D}}%
\global\long\def\rad{\mathrm{rad}}%

\global\long\def\env{\mathrm{Env}}%
\global\long\def\re{\mathrm{re}}%
\global\long\def\im{\mathrm{im}}%
\global\long\def\err{\mathrm{Err}}%

\global\long\def\d{\partial}%
 
\global\long\def\jp#1{\langle#1\rangle}%
\global\long\def\norm#1{\|#1\|}%
\global\long\def\ol#1{\overline{#1}}%
\global\long\def\wt#1{\widehat{#1}}%
\global\long\def\tilde#1{\widetilde{#1}}%

\global\long\def\br#1{(#1)}%
\global\long\def\Bb#1{\Big(#1\Big)}%
\global\long\def\bb#1{\big(#1\big)}%
\global\long\def\lr#1{\left(#1\right)}%

\global\long\def\ve{\varepsilon}%
\global\long\def\la{\lambda}%
\global\long\def\al{\alpha}%
\global\long\def\be{\beta}%
\global\long\def\ga{\gamma}%
\global\long\def\La{\Lambda}%
\global\long\def\De{\Delta}%
\global\long\def\na{\nabla}%

\global\long\def\ep{\epsilon}%
\global\long\def\fl{\flat}%
\global\long\def\sh{\sharp}%
\global\long\def\calN{\mathcal{N}}%
\global\long\def\supp{\mathrm{supp}}%

\title[Strichartz estimates and the cubic NLS on $\T^{2}$]{Strichartz estimates and global well-posedness of the cubic NLS on $\T^{2}$}
\author{Sebastian Herr}
\address{Fakultat f\"ur Mathematik, Universit\"at Bielefeld, Postfach 10 01 31, 33501 Bielefeld, Germany}
\email{herr@math.uni-bielefeld.de}
\author{Beomjong Kwak}
\address{Department of Mathematical Sciences, KAIST, 291 Daehak-ro, Yuseong-gu, Daejeon, Korea}
\email{beomjong@kaist.ac.kr}
\begin{abstract}
The optimal $L^4$-Strichartz estimate for the Schr{\"o}dinger equation on the two-dimensional rational torus
$\T^2$ is proved, which improves an estimate of Bourgain. A new method
based on incidence geometry is used. The approach yields a stronger $L^4$ bound on a
logarithmic time scale, which implies
global existence of solutions to
the cubic (mass-critical) nonlinear Schr\"odinger equation in $H^s(\T^2)$ for any $s>0$ and data which is small in the critical norm.
\end{abstract}

\subjclass[2020]{35Q41}
\maketitle
\section{Introduction}\label{sec:intro}
In the seminal work \cite{bourgain1993fourier} Bourgain proved Strichartz estimates for
the Schr\"odinger equation on (rational) tori $\T^{d}:=(\R/2\pi\Z)^{d}$. More precisely, in
dimension $d=2$, the endpoint estimate in \cite{bourgain1993fourier} states that there exists $c>0$ such that for all $\phi \in L^2(\T^2)$ and $N \in \N$,
\[ \|e^{it\Delta} P_N
\phi \|_{L^4_{t,x}([0,2\pi]\times \T^2)}\leq C_N  \|\phi\|_{L^2(\T^2)},
\text{ where } C_N= c\exp\Big(c\frac{\log(N)}{\log\log (N)}\Big).
\]
The proof in \cite{bourgain1993fourier} is based on the circle method and can be
reduced to an estimate for the number of divisors function, which
necessitates the above constant $C_N$.
However, in the example $\widehat{\phi}=\chi_{[-N,N]^2\cap \Z^2}$ we have
\begin{equation}\label{eq:ex-sharp}
    \|e^{it\Delta} P_N \phi \|_{L^4_{t,x}([0,2\pi]\times \T^2)}\approx (\log
N)^{1/4} \|\phi\|_{L^2(\T^2)},
\end{equation}
see \cite{bourgain1993fourier,takaoka20012d,kishimoto2014remark}.

More recently, the breakthrough result of Bourgain--Demeter on Fourier decoupling \cite{bourgain2015proof} provided a more robust approach which has significantly extended the range of available Strichartz estimates on rational and irrational tori. However, the above endpoint $L^4$ estimate has not been improved by this method.
Here, we will consider dimension $d=2$ only, but let us remark that in dimension $d=1$ there is a similar problem concerning the $L^6$ estimate, where it is known from \cite{bourgain1993fourier} that the best constant is between $c (\log N)^{1/6}$ and $C_N$, with  recent improvements of the upper bound to $c(\log N)^{2+\varepsilon}$ \cite{guth2020improved,guo-li-yung} by Fourier decoupling techniques.

In this paper, we obtain the sharp $L^4$ estimate in dimension $d=2$ by using methods of incidence geometry. 
Set $\log x:=\max\{1,\log_e x\}$ for $x>0$.
\begin{thm}
\label{thm:log Stri}There exists $c>0$, such that for all bounded sets $S\subset\Z^{2}$ and all $\phi\in L^{2}(\T^{2})$ we have 
\begin{equation}
\norm{e^{it\De}P_{S}\phi}_{L_{t,x}^{4}([0,2\pi]\times\T^{2}])}\leq c \left(\log\#S\right)^{1/4}\norm{\phi}_{L^{2}}.\label{eq:log Stri}
\end{equation}
\end{thm}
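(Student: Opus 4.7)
The plan has three stages, of increasing delicacy.

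First, by Plancherel on $\T^{3}=[0,2\pi]\times\T^{2}$ applied to $\norm{e^{it\De}P_{S}\phi}_{L^{4}}^{4}=\norm{(e^{it\De}P_{S}\phi)^{2}}_{L^{2}}^{2}$, together with a $TT^{*}$/duality argument, the estimate \eqref{eq:log Stri} reduces to the combinatorial bound
\[
N(S):=\#\{(n_{1},n_{2},n_{3},n_{4})\in S^{4}:\ n_{1}+n_{2}=n_{3}+n_{4},\ |n_{1}|^{2}+|n_{2}|^{2}=|n_{3}|^{2}+|n_{4}|^{2}\}\les(\#S)^{2}\log(\#S)
\]
for every finite $S\subset\Z^{2}$. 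The two conditions together are equivalent to $\{n_{1},n_{2}\}$ and $\{n_{3},n_{4}\}$ being diameters of a common circle in $\R^{2}$: the four points are the vertices of an inscribed rectangle (possibly degenerate). Denoting by $k_{C}$ the number of antipodal pairs of $S$-points on a circle $C$, this yields $N(S)=4\sum_{C}k_{C}^{2}$.

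Second, I would set $u:=n_{1}-n_{3}=n_{4}-n_{2}$. The sum constraint becomes automatic, and the norm-sum constraint is converted to the perpendicularity relation $u\cdot(n_{3}-n_{2})=0$; hence
\[
N(S)=|S|^{2}+\sum_{u\neq 0}\sum_{L\perp u}|S_{u}\cap L|^{2},\qquad S_{u}:=S\cap(S-u),
\]
where $L$ ranges over affine lattice lines perpendicular to $u$. Writing $u=du_{0}$ with $u_{0}\in\Z^{2}$ primitive, the lattice points on such an $L$ are spaced by $|u_{0}|$, and
\[
\sum_{L\perp u}|S_{u}\cap L|^{2}\le\left(\max_{L\perp u}|S_{u}\cap L|\right)|S_{u}|
\]
reduces the problem to an incidence-geometric control of the maximum line richness.

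Third, summing in $u$ via the reparametrisation
\[
\sum_{u\neq 0}\frac{|S_{u}|}{|u_{0}(u)|}=\sum_{v_{0}\text{ primitive}}\frac{1}{|v_{0}|}\sum_{d\ge 1}|S_{dv_{0}}|,
\]
the logarithmic factor is extracted from the classical identity $\sum_{v\in\Z^{2}\setminus\{0\},\,|v|\le R}|v|^{-2}\asymp\log R$ evaluated at $R\asymp(\#S)^{1/2}$.

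The main obstacle is the maximum-richness estimate of the second stage. For $S$ of \emph{cube-like} geometry (diameter $\asymp(\#S)^{1/2}$) the trivial bound $\max_{L\perp u}|S_{u}\cap L|\les\mathrm{diam}(S)/|u_{0}|$ is sharp, and the scheme above cleanly delivers $(\#S)^{2}\log(\#S)$, matching Bourgain's saturating example $\widehat{\phi}=\chi_{[-N,N]^{2}\cap\Z^{2}}$. For general $S$---in particular when $S$ concentrates near a low-dimensional subvariety and $\mathrm{diam}(S)\gg(\#S)^{1/2}$---the diameter-based estimate is too weak, and a genuine Szemerédi--Trotter-type incidence estimate for the points of $S$ against the family of lattice lines perpendicular to $u$ must be deployed to preserve the logarithmic exponent uniformly in $S$. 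This is the most likely locus of the new incidence-geometric method advertised in the abstract.
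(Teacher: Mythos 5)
Your first stage contains a gap that is fatal to the whole scheme. You reduce the $L^4$ estimate for \emph{general} $\phi\in L^2(\T^2)$ to the pure counting bound $N(S)\lesssim(\#S)^2\log\#S$, but no Plancherel/$TT^*$ manipulation accomplishes this. Expanding $\norm{e^{it\De}P_S\phi}_{L^4}^4$ by Plancherel in $(t,x)$ gives the \emph{weighted} sum
\[
\sum_{\substack{n_1+n_3=n_2+n_4\\ |n_1|^2+|n_3|^2=|n_2|^2+|n_4|^2}}\widehat\phi(n_1)\overline{\widehat\phi(n_2)}\,\widehat\phi(n_3)\overline{\widehat\phi(n_4)},
\]
and a bound on the number of solution quadruples in $S^4$ controls this only when $|\widehat\phi|$ is constant on its support, i.e.\ essentially $\widehat\phi=\chi_S$. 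For general $\phi$ with Fourier support in $S$, the weights can concentrate on the rectangles, and the counting bound gives nothing. The paper is explicit that the $\widehat\phi=\chi_S$ case is a corollary of the known Pach--Sharir bound for repeated right angles, and that the substance of the result is handling arbitrary weights. The paper does this via Proposition \ref{prop:inductive}: it decomposes $\widehat\phi$ into level sets $S_j$ with $\#S_j\le 2^j$, iteratively prunes (using Szemer\'edi--Trotter) the small set of points lying at the intersection of two rich lines, and then runs a weighted, multi-scale rectangle count over the pruned configuration. Your proposal contains no analogue of this decomposition-and-pruning step, so even if all three of your stages were completed, you would only recover a known special case of the theorem.

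Beyond this, you acknowledge yourself that the crucial maximum-richness estimate in stage two is missing for general $S$, so even the unweighted counting bound is not established by your argument. And the estimate $\sum_{L\perp u}|S_u\cap L|^2\le(\max_L|S_u\cap L|)|S_u|$, followed by a single application of Szemer\'edi--Trotter to control the maximum, loses too much: $\max_L|S_u\cap L|$ can be as large as $\sim|S_u|^{1/2}$ for infinitely many $u$ without there being any logarithmic gain from the $|u_0|^{-1}$ sum, so the naive combination of these two bounds does not close. The paper avoids this by classifying vertices of rectangles into three types according to the richness $a$ of the incident lines relative to the scale $j$ of the level set, and by counting rectangles of each type $(\al,\be)$ separately (Lemma \ref{lem:3ineq}), with a careful interpolation in the dyadic parameters $j_k,a_k$; this is the mechanism that replaces your ``max times cardinality'' shortcut. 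Finally, note that the paper does not prove Theorem \ref{thm:log Stri} directly; it deduces it from the strictly stronger Theorem \ref{thm:local Stri} (a uniform $L^4$ bound on the short interval $[0,1/\log\#S]$), which additionally requires the elementary number-theoretic observation that $\tau_Q$ is a multiple of $\gcd(\xi_1-\xi_4)$. Your sketch has no counterpart to that refinement either, though it is not strictly needed if one only wants the $(\log\#S)^{1/4}$ bound on $[0,2\pi]$.
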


In fact, we prove a stronger result.
\begin{thm}
  \label{thm:local Stri}
There exists $c>0$, such that for all bounded sets $S\subset\Z^{2}$ and all $\phi\in L^{2}(\T^{2})$ we have 
\begin{equation}
\norm{e^{it\De}P_{S}\phi}_{L_{t,x}^{4}([0,\frac{1}{\log\#S}]\times\T^{2}])}\leq c \norm{\phi}_{L^{2}}.\label{eq:localStri}
\end{equation}
\end{thm}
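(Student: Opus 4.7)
The plan is to reduce the short-time $L^{4}$ estimate to counting ``near-rectangles'' on the lattice paraboloid $\{(n,|n|^{2}):n\in\Z^{2}\}$ via Fourier-side analysis, and then to invoke an incidence-geometric bound on such quadruples. Writing $\phi=\sum_{n\in S}a_{n}e^{in\cdot x}$ and $u=e^{it\De}P_{S}\phi$, the square $u^{2}=\sum_{\ell,k}c_{\ell,k}e^{i(\ell\cdot x-kt)}$ has coefficients
\[
c_{\ell,k}=\sum_{\substack{n_{1}+n_{2}=\ell\\|n_{1}|^{2}+|n_{2}|^{2}=k}}a_{n_{1}}a_{n_{2}}.
\]
Setting $T=1/\log(\#S)$ and applying Parseval with a smooth time cutoff $\chi_{T}$ of $[0,T]$, a standard computation yields
\[
\norm{u^{2}}_{L^{2}([0,T]\times\T^{2})}^{2}\les T\sum_{\ell\in\Z^{2}}\sum_{\substack{k_{1},k_{2}\in\Z\\|k_{1}-k_{2}|\les1/T}}c_{\ell,k_{1}}\ol{c_{\ell,k_{2}}}=:T\cdot Q.
\]
The theorem thus reduces to the bound $|Q|\les(\log\#S)\norm{\phi}_{L^{2}}^{4}$.

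Expanding $Q$ and setting $u:=n_{1}-n_{3}$, $v:=n_{1}-n_{4}$ (so that $n_{1}+n_{2}=n_{3}+n_{4}$ forces $n_{2}=n_{1}-u-v$, while $|n_{1}|^{2}+|n_{2}|^{2}-|n_{3}|^{2}-|n_{4}|^{2}=2u\cdot v$), $Q$ becomes a weighted count of quadruples $(n_{1},u,v)$ with $n_{1},n_{1}-u,n_{1}-v,n_{1}-u-v\in S$ and $|u\cdot v|\les\log\#S$ --- that is, ``almost rectangular'' parallelograms inscribed in $S$. By Cauchy--Schwarz in the pairing $(n_{1},n_{2})\leftrightarrow(n_{3},n_{4})$ together with the symmetry $u\leftrightarrow-u$, it suffices to prove
\[
\sum_{n_{1},n_{2}\in S}|a_{n_{1}}|^{2}|a_{n_{2}}|^{2}\,B(n_{1},n_{2})\les(\log\#S)\norm{\phi}_{L^{2}}^{4},
\]
where $B(n_{1},n_{2})$ counts $u\in\Z^{2}$ with $n_{1}-u,\,n_{2}+u\in S$ and $u$ lying in a fixed annulus of area $\les\log\#S$ centered at $(n_{1}-n_{2})/2$.

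The key step, and the main obstacle, is to control $B(n_{1},n_{2})$ in the weighted sense above. Since the annulus has area $\les\log\#S$ independently of its radius, a matching lattice-point bound would close the argument; however, a pointwise $O(\log\#S)$ estimate is false in general, because concentrations of the divisor-like function $r_{2}(k)$ produce annuli with anomalously many lattice points --- precisely the source of the divisor loss in Bourgain's original bound. The incidence-geometric input of the paper must therefore exploit the constraint $u\in(S-n_{1})\cap(-S+n_{2})$ coming from $n_{3},n_{4}\in S$ to suppress such exceptional annuli. The natural route is a dyadic decomposition in $|n_{1}-n_{2}|$ combined with a Szemer\'edi--Trotter-type bound on the incidences between points of $S$ and the resulting family of annuli; together with a second-moment argument on the weights $|a_{n}|^{2}$, this should yield the required bound on $Q$ and thereby Theorem~\ref{thm:local Stri}. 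Theorem~\ref{thm:log Stri} then follows by summing over $O(\log\#S)$ translates of $[0,T]$ covering $[0,2\pi]$.
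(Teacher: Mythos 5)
Your Fourier-side setup is correct and in the spirit of the paper: after reducing to $\|u\|_{L^4}^4=\|u^2\|_{L^2}^2$ on the short interval, one is indeed led to counting ``almost rectangular'' parallelograms $(n_1,n_3,n_2,n_4)$ inscribed in $S$ with $\tau=\bigl||n_1|^2+|n_2|^2-|n_3|^2-|n_4|^2\bigr|\lesssim\log\#S$, and your Cauchy--Schwarz/AM--GM reduction to a weighted second moment of $B(n_1,n_2)$ is a legitimate step. However, the proposal stops precisely where the theorem actually has to be proved: you explicitly flag the incidence estimate as ``the main obstacle,'' note (correctly) that a pointwise $B\lesssim\log\#S$ is false, and then only gesture at ``a Szemer\'edi--Trotter-type bound on the incidences between points of $S$ and the resulting family of annuli.'' This is a genuine gap, not a routine fill-in: Szemer\'edi--Trotter in the form the paper uses (Proposition~\ref{prop:SzTr}) is a statement about points and \emph{lines}; the incidence theory of points and circles/annuli has strictly weaker exponents and is not known to give the sharp $\log$ bound. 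The paper avoids annuli entirely by working with the four \emph{sides} of each rectangle as lines through its vertices.

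Beyond the choice of incidence object, two further ingredients of the paper are missing and appear essential. First, the superlevel-set decomposition $f=\sum_j\lambda_j 2^{-j/2}\chi_{S_j}$ together with the iterative pruning of ``bad'' intersection points (the sets $E_j$, removed using \eqref{eq:SzTr}) is what puts the point sets in a position where, for each $\xi\in S_j$, at most one line through $\xi$ is $2^{j/2+C}$-rich in $S_j$. This structural hypothesis drives the entire vertex-type classification and the counting in Lemmas~\ref{lem:j1+j2+a3} and \ref{lem:counting}; without it, one cannot sum the rectangle counts without incurring losses. Your sketch keeps the weights on $(n_1,n_2)$ only and discards the dyadic information on $(n_3,n_4)$, which is precisely the information the paper's type-$(\alpha,\beta)$ classification of \emph{all four} vertices exploits (cf.\ the remark in the introduction that the proof ``also makes use of the fourth vertex''). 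Second, for the nondegenerate $\tau\neq0$ contributions the paper uses the arithmetic fact that $\tau_Q=2|(\xi_1-\xi_2)\cdot(\xi_1-\xi_4)|$ must be a multiple of $\gcd(\xi_1-\xi_4)$; this gcd saving in \eqref{eq:sum Q_22 1/gcd f(Q) < la_2^4} is what eliminates the last logarithmic factor and yields a bound on the time scale $1/\log\#S$ rather than $1/(\log\#S)^{1+\varepsilon}$. Your annulus formulation does not surface this divisibility constraint, and without it the proposed second-moment argument would at best recover a lossy version of the estimate. In short: the reduction is sound, but the geometric-combinatorial core of the proof is absent and the suggested route does not obviously lead to it.
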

\begin{rem}\label{rmk:stronger}
Theorem \ref{thm:local Stri} implies Theorem \ref{thm:log Stri}:
Applying
\eqref{eq:localStri} to each interval $[2\pi\frac{k-1}{m},2\pi\frac{k}{m}]$, $k=1,\ldots,m$,
for $m\approx\log\#S$, we obtain \eqref{eq:log Stri}.
In particular, \eqref{eq:ex-sharp} implies the sharpness of Theorem \ref{thm:local Stri} as well.
\end{rem}
For the proof of Theorem \ref{thm:local Stri} we develop a new method based on a counting argument for parallelograms with vertices in given sets, which relies on the Szemer\'{e}di-Trotter Theorem.
We remark that the Szemer\'{e}di-Trotter Theorem was previously used to bound the number of right triangles with vertices in a given set \cite{pach1992repeated} and it has also been introduced in \cite{bourgain2015proof} in connection to Fourier decoupling and discrete Fourier restriction theory. More precisely, if $\widehat{\phi}=\chi_S$, estimate \eqref{eq:log Stri} is a corollary of the Pach-Sharir bound in \cite{pach1992repeated}. We point out that in our proof of Theorem \ref{thm:local Stri} we also make use of the fourth vertex.

Theorems \ref{thm:log Stri} and \ref{thm:local Stri} apply to functions with Fourier support in arbitrary sets. While we make use of the lattice structure, we only use an elementary number theoretic argument in the proof of Theorem \ref{thm:local Stri}: In the parallelogram $(\xi_1,\xi_2,\xi_3,\xi_4)\in (\Z^2)^4$ the quantity $\tau=2(\xi_1-\xi_2)\cdot(\xi_1-\xi_4)$ must be a multiple of the greatest common divisor of the two coordinates of $\xi_1-\xi_4$, which is used to avoid a logarithmic loss in Theorem \ref{thm:local Stri}.

The $L^4$-Strichartz estimate plays a distinguished role in the analysis of the cubic nonlinear Schr\"{o}dinger equation (cubic NLS)
\begin{equation}
iu_{t}+\De u=\pm|u|^{2}u, \qquad u|_{t=0}=u_{0}\in H^{s}(\T^{2}),
\tag{{NLS}}\label{eq:NLS}
\end{equation}
which is $L^2(\T^2)$-critical. \eqref{eq:NLS}
is known to be locally well-posed in Sobolev spaces $H^s(\T^2)$ for $s>0$ due to \cite{bourgain1993fourier}.
It is also known \cite[Cor.\ 1.3]{kishimoto2014remark} that the Cauchy problem is not perturbatively well-posed in $L^2(\T^2)$, which is closely related to the example \eqref{eq:ex-sharp} discussed above.

By the conservation of energy, local well-posedness in $H^1(\T^2)$ implies global well-posedness for small enough data \cite[Theorem 2]{bourgain1993fourier}. In the
defocusing case, this has been refined to global well-posedness in
$H^s(\T^2)$ for $s>3/5$; see \cite{de2007global,fan2018bilinear,schippa2023improved}.
Additionally, the result in \cite{iteam-wt} shows that energy is transferred from small to higher frequencies and therefore causing growth of Sobolev norms $\|u(t)\|_{H^s}$ for $s>1$.

Theorem \ref{thm:local Stri} has the following consequence:
\begin{thm}
\label{thm:GWP}There exists $\delta>0$ such that for $s>0$ and initial data $u_{0}\in H^{s}(\T^2)$ with $\norm{u_{0}}_{L^{2}(\T^2)}\leq \delta$ the Cauchy problem \eqref{eq:NLS} is globally
well-posed.
\end{thm}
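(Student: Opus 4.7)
The plan is to run a Picard iteration for \eqref{eq:NLS} on short time intervals $I_{T}=[0,T]$, using Theorem \ref{thm:local Stri} to close a trilinear estimate in which the $H^{s}$-derivative is placed on only one of the three factors, and then to iterate using $L^{2}$-conservation to pass from local to global in time. First, by the standard $X^{s,b}$-transfer principle, Theorem \ref{thm:local Stri} gives, for any bounded $S\subset\Z^{2}$ and any $T\leq c/\log\#S$,
\[
\|P_{S}u\|_{L^{4}_{t,x}(I_{T}\times\T^{2})}\lesssim \|P_{S}u\|_{X^{0,1/2+\epsilon}(I_{T})},
\]
uniformly in $\#S$.

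The central step is the split trilinear estimate
\[
\|u_{1}\bar{u}_{2}u_{3}\|_{X^{s,-1/2+\epsilon}(I_{T})}\lesssim \|u_{1}\|_{X^{s,1/2+\epsilon}(I_{T})}\|u_{2}\|_{X^{0,1/2+\epsilon}(I_{T})}\|u_{3}\|_{X^{0,1/2+\epsilon}(I_{T})},
\]
where the $H^{s}$-weight lies on a single factor. By duality and a Littlewood--Paley decomposition, with (WLOG) $N_{1}\geq N_{2}\geq N_{3}$ on the $u_{j}$-side and highest dual frequency $N_{4}\lesssim N_{1}$, I would place the $H^{s}$-weight on the $N_{1}$-factor paired with the dual testing function, apply fourfold Cauchy--Schwarz in pairs to reduce the integral to a product of four $L^{4}$-norms, and use the linear bound above on a subinterval of length $\lesssim 1/\log N_{1}$. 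The dyadic sum over $(N_{1},N_{2},N_{3},N_{4})$ then converges thanks to an $\epsilon$-gain from $s>0$.

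Given the trilinear bound, a standard contraction argument in $X^{s,1/2+\epsilon}(I_{T})$ produces a unique local solution whenever $\delta=\|u_{0}\|_{L^{2}}$ is small, with $\|u(T)\|_{H^{s}}\leq M\|u_{0}\|_{H^{s}}$ for some $M>1$ and an interval length $T$ depending only on $\delta$ and on a logarithm of $\|u_{0}\|_{H^{s}}$. Mass conservation gives $\|u(T)\|_{L^{2}}=\|u_{0}\|_{L^{2}}\leq \delta$, so the same construction applies to $u(T)$ as new initial data. Iterating, the $H^{s}$-norm grows at most multiplicatively per step while the time-step length shrinks only like the reciprocal of the logarithm of the current $H^{s}$-norm, so the total time covered diverges and the solution extends globally.

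The main obstacle is the trilinear estimate in split form with \emph{no} logarithmic loss. Bourgain's $L^{4}$ Strichartz carries a $\log$-loss that prevents decoupling the $H^{s}$-derivative onto a single factor while keeping the other two factors in the $L^{2}$-based space $X^{0,1/2+\epsilon}$; the sharp, no-loss Strichartz on the critical time scale $1/\log\#S$ afforded by Theorem \ref{thm:local Stri} is exactly what permits this decoupling, and thereby turns local well-posedness into global well-posedness for small $L^{2}$ data.
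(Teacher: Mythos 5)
The overall iteration strategy here---multiplicative growth of the $H^{s}$-norm on time steps of length $\sim 1/\log(\text{current }H^{s}\text{-size})$, with $\sum_{j}1/\log(M^{j})=\infty$ covering any finite time---is the same as the paper's. But the claimed trilinear estimate has a genuine gap that the proposal does not address. Your estimate
\[
\|u_1\bar u_2 u_3\|_{X^{s,-1/2+\epsilon}(I_T)}\lesssim\|u_1\|_{X^{s,1/2+\epsilon}}\|u_2\|_{X^{0,1/2+\epsilon}}\|u_3\|_{X^{0,1/2+\epsilon}}
\]
gains, by duality, only the factor $(N_4/N_1)^{s}\le 1$ from placing the $s$-weight on the highest input frequency $N_1$ and the dual frequency $N_4\lesssim N_1$. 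In the balanced interaction $N_1\sim N_2\sim N_3\sim N_4\sim M$ this factor is $\sim 1$: there is no $\epsilon$-gain where one is needed. Meanwhile the time interval has length $T\approx 1/\log N_0$, where $N_0$ is fixed by the size of the data; applying Theorem~\ref{thm:local Stri} to a frequency-$M$ piece forces you to partition $I_T$ into $\approx\log M/\log N_0$ subintervals, producing a factor $(\log M/\log N_0)^{1/4}$ per $L^4$ norm, hence $\log M/\log N_0$ for the four-fold Cauchy--Schwarz. The dyadic sum $\sum_{M\ge N_0}\log M/\log N_0$ then diverges, so the trilinear estimate does not close. The proposal simply asserts the dyadic sum ``converges thanks to an $\epsilon$-gain from $s>0$'' without confronting this loss.

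The missing ingredient is that the derivative gain must be measured against the base frequency $N_0$ that sets the time scale, not against the other interacting frequencies. The paper accomplishes this via the combined norm $\|u\|_{Z_N}=\|\chi_{I_N}u\|_{Y^0}+N^{-s}\|\chi_{I_N}u\|_{Y^s}$: projecting to frequencies $\gtrsim M$ gains $(N/M)^{s}$, which decays geometrically in $\log(M/N)$ and therefore dominates the polynomial $\log$-loss once $N\gg_s 1$ (see the absorption of $\frac{\log M}{\log N}\cdot\frac{N^s}{M^s}$ in the proof of Lemma~\ref{lem:tri-est}). To repair your approach you would need an analogous combined norm such as $\|u\|_{X^{0,1/2+\epsilon}(I_T)}+N_0^{-s}\|u\|_{X^{s,1/2+\epsilon}(I_T)}$ with $T\approx 1/\log N_0$, and an explicit accounting of the $\log M/\log N_0$ loss against the $(N_0/M)^{s}$ gain (together with an $s$-independent implicit constant, which is crucial for the iteration). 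The choice of $X^{s,b}$ over the paper's $V^2$-based $Y^s$ spaces is a secondary technical matter; the absence of this loss--gain bookkeeping is what makes the proposal, as written, not close.
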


The proof is based on an estimate showing that $\|u(t)\|_{H^s(\T^2)}$ can grow only by a fixed multiplicative constant on a logarithmic time scale and because of
$\sum_{N\in 2^\N} 1/\log N=\infty$, any finite time interval can be covered. This argument crucially relies on the sharpness of
the estimate in Theorem \ref{thm:local Stri}.
Indeed, if the time interval in Theorem \ref{thm:local Stri} were $[0,(\log \#S)^{-\alpha}]$ for $\alpha>1$ instead, the sum would be $\sum_{N\in2^\N}1/(\log N)^\alpha<\infty$, which would not yield a global result.

\subsection*{Outline of the paper}\label{subsec:outline}
In Section \ref{sec:pre}, we introduce notations and recall the Szemer\'{e}di-Trotter Theorem. In Section \ref{sec:proof} we provide the proof of Theorem \ref{thm:local Stri}. Finally, in Section \ref{sec:proof-gwp} we prove the global well-posedness result, i.e.\ Theorem \ref{thm:GWP}.

\section{Preliminaries}\label{sec:pre}
We write $A\les B$ if $A\le CB$ for some universal constant $C>0$, and $A\approx B$
if both $A\les B$ and $B\les A$.
Given a set $E$, we denote $\chi_{E}$ as the sharp cutoff at $E$.

For proposition $P$, denote by $1_{P}$ the indicator function $$1_{P}:=\begin{cases}
1 & \text{, \ensuremath{P} is true}\\
0 & \text{, otherwise}
\end{cases}.$$

For a
function $f:\T^{2}\rightarrow\C$, $\mc{F}f=\widehat{f}$ denotes the Fourier series of $f$. For $S\subset\Z^{2}$,
we denote by $P_{S}$ the Fourier multiplier $\widehat{P_{S}f}:=\chi_{S}\cdot\widehat{f}$.
$2^{\N}$ denotes the set of dyadic numbers. For dyadic number $N\in2^{\N}$,
we denote by $P_{\le N}$ the sharp Littlewood-Paley cutoff $P_{\le N}f:=P_{[-N,N]^{2}}f$.
We denote $P_{N}:=P_{\le N}-P_{\le N/2}$, where we set $P_{\le1/2}:=0$.
For function $\phi:\T^{2}\rightarrow\C$ and time $t\in\R$, we define
$e^{it\De}\phi$ as the function such that
\[
\widehat{e^{it\De}\phi}(\xi)=e^{- it\left|\xi\right|^{2}}\widehat{\phi}(\xi).
\]

For simplicity, we denote $u_{N}=P_{N}u$ and $u_{\le N}=P_{\le N}u$,
for $u:\T^2\rightarrow\C$.

\subsection*{Geometric notations on $\Z^{2}$}
For integer point $(a,b)\in\Z^{2}$, ${(a,b)}^{\perp}$ denotes $(-b,a)$.

For integer point $(a,b)\in\Z^{2}\setminus\left\{ 0\right\} $, $\gcd\left((a,b)\right)$
denotes $\gcd(a,b)$.

Given two integer points $\xi_1,\xi_2\in \Z^2$, $\overleftrightarrow{\xi_1\xi_2}$ denotes the line through $\xi_1$ and $\xi_2$.

A \emph{parallelogram} is a quadruple $Q=(\xi_{1},\xi_{2},\xi_{3},\xi_{4})\in(\Z^{2})^{4}$
such that $\xi_{1}+\xi_{3}=\xi_{2}+\xi_{4}$. The set of all parallelograms is denoted by $\mc Q$.
\begin{figure}[!h]
\centering
\begin{tikzpicture}
\centering
  \coordinate [label=left:$\xi_1$]  (1) at (0,0);
  \coordinate [label=right:$\xi_4$] (2) at (2.5,0.5);
\coordinate [label=right:$\xi_3$] (3) at (3.5,-0.5);
\coordinate [label=left:$\xi_2$] (4) at (1,-1);
\coordinate [label=center:$Q$] (5) at (1.75,-0.25);
\draw (1) -- (2);
\draw (2) -- (3);
\draw (3) -- (4);
\draw (4) -- (1);
\foreach \x in {(1), (2), (3), (4)}
        \fill \x circle[radius=2pt];
\end{tikzpicture}
\caption{Parallelogram $Q$}
\end{figure}
\emph{Segments} and \emph{points} are two-element pairs and elements of $\Z^{2}$, respectively.
We call by the edges of $Q$ either the segments $(\xi_{1},\xi_{2}),(\xi_{2},\xi_{3}),(\xi_{3},\xi_{4}),(\xi_{4},\xi_{1})$,
or the vectors $\pm\left(\xi_{1}-\xi_{2}\right),\pm\left(\xi_{1}-\xi_{4}\right)$.

For parallelogram $Q=(\xi_{1},\xi_{2},\xi_{3},\xi_{4})\in\mc Q$, we denote by $\tau_{Q}$ the number
\[
\tau_{Q}=\tau(\xi_{1},\xi_{2},\xi_{3},\xi_{4})=\left|\left|\xi_{1}\right|^{2}-\left|\xi_{2}\right|^{2}+\left|\xi_{3}\right|^{2}-\left|\xi_{4}\right|^{2}\right|=2\left|\left(\xi_{1}-\xi_{2}\right)\cdot\left(\xi_{1}-\xi_{4}\right)\right|.
\]
For $\tau\in\N$, we denote by $\mc Q^{\tau}$ the set of parallelograms $Q\in\mc Q$ such that $\tau_{Q}=\tau$. Thus, in particular, $\mc Q^{0}$ is the set of rectangles.

\subsection*{Szemer\'{e}di-Trotter}
The following is a consequence of Szemer\'{e}di-Trotter theorem of incidence
geometry.
\begin{prop}[{\cite[Corollary 8.5]{tao2006additive}}]
\label{prop:SzTr}Let $S\subset\R^{2}$ be a set of $n$ points,
where $n\in\N$. Let $k\ge2$ be an integer. The number $m$ of lines
in $\R^{2}$ passing through at least $k$ points of $S$ is bounded
by
\begin{equation}
m\les\frac{n^{2}}{k^{3}}+\frac{n}{k}.\label{eq:SzTr}
\end{equation}
\end{prop}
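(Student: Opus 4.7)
The plan is to derive this from the standard Szemer\'edi--Trotter incidence bound, which I would take as a black box: for any set $P$ of $n$ points and any set $L$ of $m$ lines in $\R^{2}$, the number of incidences $I(P,L) := \#\{(p,\ell)\in P\times L : p\in\ell\}$ satisfies
\[
I(P,L) \les (nm)^{2/3} + n + m.
\]

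First I would let $L$ denote the collection of lines in $\R^{2}$ that pass through at least $k$ points of $S$, and set $m := \#L$. By definition of $L$, each such line contributes at least $k$ to the incidence count with $S$, so $I(S,L) \ge mk$. Applying the incidence bound with $P=S$ and this $L$ therefore yields
\[
mk \les (nm)^{2/3} + n + m.
\]

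Next, since $k\ge 2$, the term $m$ on the right may be absorbed into the left-hand side after adjusting the implicit constant, leaving $mk \les (nm)^{2/3} + n$. I would then split into two cases according to which term dominates on the right. If $mk \les (nm)^{2/3}$, rearranging yields $m^{1/3}k \les n^{2/3}$, hence $m \les n^{2}/k^{3}$. If instead $mk \les n$, then $m \les n/k$. Combining, $m \les n^{2}/k^{3} + n/k$, which is the stated bound.

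The only substantive ingredient is the Szemer\'edi--Trotter incidence bound itself, whose proof is classical (via the crossing number inequality or cell decomposition); once this is assumed, the derivation here reduces to absorbing a lower-order term and performing an elementary case split, so no real obstacle arises. If a self-contained argument is desired, the natural route would be to reproduce a short proof of the incidence bound via the crossing number inequality applied to the graph whose vertices are the points of $S$ and whose edges are consecutive pairs on each line.
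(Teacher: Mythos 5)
The paper does not prove this proposition at all; it is cited as \cite[Corollary 8.5]{tao2006additive} and used as a black box. Your derivation from the incidence form of the Szemer\'edi--Trotter theorem is the standard one and is essentially correct.

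One small point deserves care: the absorption of the term $m$ into the left-hand side is not automatic from $k\ge 2$ alone. From $mk \le C\bigl((nm)^{2/3}+n+m\bigr)$ you can only subtract $Cm$ from both sides and still have something comparable to $mk$ when $k \ge 2C$; if the implicit constant $C$ exceeds $1$, the range $2 \le k < 2C$ is not covered by that manipulation. The fix is trivial: for $k$ bounded by a constant, the claimed bound $m\les n^{2}/k^{3}+n/k$ is comparable to $m\les n^{2}$, which holds unconditionally since any line in $L$ passes through at least two points of $S$ and hence $m\le\binom{n}{2}$. With that two-line patch, your argument is complete. An alternative, cleaner presentation avoids the subtraction entirely: split into the three cases according to which of $(nm)^{2/3}$, $n$, $m$ dominates; the third case forces $k\les 1$, which falls to the trivial $\binom{n}{2}$ bound, and the other two give the two terms of the conclusion as you wrote.
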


\begin{rem}\label{rmk:sztr-sharp}
An optimizer $S$ for \eqref{eq:SzTr} is a lattice $S=\Z^{2}\cap[-N,N]^{2},N\in\N$.
\end{rem}

\section{Proof of Theorem \ref{thm:local Stri}}\label{sec:proof} 

In this section, we prove Theorem \ref{thm:local Stri}. We will first
reduce Theorem \ref{thm:local Stri} to Proposition \ref{prop:inductive},
then to showing Lemma \ref{lem:3ineq}. Then we will finish the proof by
showing Lemma \ref{lem:3ineq}.

The proof of Theorem \ref{thm:local Stri} will be reduced to the following proposition.
\begin{prop} \label{prop:inductive}
Let $f:\Z^{2}\rightarrow[0,\infty)$ be a function
of the form
\[
f=\sum_{j=0}^{m}\la_{j}2^{-j/2}\chi_{S_{j}},
\]
where $S_{0},\ldots,S_{m},m\ge 1$ are disjoint subsets of $\Z^{2}$
such that $\#S_{j}\le2^{j}$, and $\la_{0},\ldots,\la_{m}\ge0$. Suppose
that for each $j=0,\ldots,m$ and $\xi\in S_{j}$, there exists at
most one line $\ell\ni\xi$ such that $\#(\ell\cap S_{j})\ge2^{j/2+C}$.
Then, we have
\begin{equation}
\sum_{Q\in\mc Q^{0}}f(Q)\les m\cdot\norm{\la_{j}}_{\ell_{j\le m}^{2}}^{4}\label{eq:sum_Q^0  f(Q) < m |la|_2^4}
\end{equation}
and
\begin{equation}
\sup_{M\in2^{\N}}\frac{1}{M}\sum_{\tau\approx M}\sum_{Q\in\mc Q^{\tau}}f(Q)\les\norm{\la_{j}}_{\ell_{j\le m}^{2}}^{4}.\label{eq:1/M sum_t~M sum_Q^t f(Q) < |la|_2^4}
\end{equation}
Here, $C>0$ is a uniform constant to be specified shortly, and
$f(Q)$ denotes $f(\xi_{1})f(\xi_{2})f(\xi_{3})f(\xi_{4})$ for parallelogram
$Q=(\xi_{1},\xi_{2},\xi_{3},\xi_{4})$.
\end{prop}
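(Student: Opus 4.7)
The plan is to expand the sums in \eqref{eq:sum_Q^0  f(Q) < m |la|_2^4} and \eqref{eq:1/M sum_t~M sum_Q^t f(Q) < |la|_2^4} via the dyadic decomposition of $f$ and reduce to a purely combinatorial counting lemma for parallelograms, which will be proved using the Szemer\'edi--Trotter theorem. Concretely, for fixed $\tau$ one has
\[
\sum_{Q \in \mc Q^\tau} f(Q) = \sum_{j_1,\ldots,j_4 = 0}^{m} \lambda_{j_1}\lambda_{j_2}\lambda_{j_3}\lambda_{j_4}\, 2^{-(j_1+j_2+j_3+j_4)/2}\, N_\tau(\vec j),
\]
where $N_\tau(\vec j) := \#\{(\xi_1,\ldots,\xi_4) \in \mc Q^\tau : \xi_i \in S_{j_i}\}$, using that the $S_j$ are disjoint. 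Exploiting the $D_4$-symmetry of a parallelogram, the outer sum can be restricted to ordered tuples $\vec j$; for \eqref{eq:1/M sum_t~M sum_Q^t f(Q) < |la|_2^4} there is an additional sum over $\tau \approx M$.

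I would then package the main combinatorial content as the forthcoming Lemma \ref{lem:3ineq}, giving three upper bounds on $N_\tau(\vec j)$. The first is a trivial estimate $N_\tau(\vec j) \le \#S_{j_1}\#S_{j_2}\#S_{j_4}$ (three vertices determine the fourth). The second is a Szemer\'edi--Trotter bound: parameterize $Q$ by one vertex $\xi_1$ and a side $v = \xi_2-\xi_1$, then count $\xi_4$ on the line through $\xi_1$ prescribed by the dot-product constraint $2v\cdot(\xi_1-\xi_4) = \pm\tau$, together with $\xi_3 = \xi_2 + (\xi_4-\xi_1)$ on the parallel line through $\xi_2$; Proposition \ref{prop:SzTr} is applied after splitting lines into \emph{rich} (at least $2^{j/2+C}$ points of $S_j$) and \emph{sparse}, with the one-rich-line hypothesis ensuring that the rich regime is handled by ``one line per point'' and the sparse regime by \eqref{eq:SzTr} without loss. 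The third is a gcd bound for $\tau \neq 0$: from $\gcd(\xi_1-\xi_4)\mid\tau$ the admissible displacements $w = \xi_4-\xi_1$ are restricted to a divisor-controlled set which, combined with the SzTr count, provides the extra factor of $M$ needed to offset the $1/M$ in \eqref{eq:1/M sum_t~M sum_Q^t f(Q) < |la|_2^4}.

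With Lemma \ref{lem:3ineq} in hand, I would combine the three bounds in different regimes of $\vec j$ and sum using Cauchy--Schwarz on $\lambda_{j_1}\lambda_{j_2}\lambda_{j_3}\lambda_{j_4}$; the factors $2^{-j_i/2}$ should cancel the SzTr growth to leave convergent geometric series, except that in \eqref{eq:sum_Q^0  f(Q) < m |la|_2^4} one such sum (over the largest index) is unrestricted and contributes the factor $m$. I expect the main obstacle to be Lemma \ref{lem:3ineq} itself, specifically calibrating the rich/sparse dichotomy so that no dyadic logarithm is produced in the Szemer\'edi--Trotter step; this is precisely where the single-rich-line hypothesis must be used sharply, as a careless application of \eqref{eq:SzTr} would introduce a logarithmic factor. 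A secondary technical point is the gcd argument: one must translate the divisibility $\gcd(\xi_1-\xi_4)\mid\tau$ into a sharp count of lattice points on the line prescribed by the dot-product condition in order to recover exactly the factor $M$ that separates \eqref{eq:1/M sum_t~M sum_Q^t f(Q) < |la|_2^4} from \eqref{eq:sum_Q^0  f(Q) < m |la|_2^4}.
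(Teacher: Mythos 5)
Your outline picks up several ingredients that do appear in the paper --- the dyadic expansion using the profile decomposition of $f$, Szemer\'edi--Trotter, a rich/sparse dichotomy keyed to the single-rich-line hypothesis, and a gcd mechanism to recover the factor $M$ --- but it misses the step that makes these cohere. The paper does not attempt to count parallelograms in $\mc Q^{\tau}$ directly. An auxiliary lemma first applies Cauchy--Schwarz over the two parallel sides $(\xi_1,\xi_4)$ and $(\xi_2,\xi_3)$, grouped by the common direction $\xi=\xi_1-\xi_4$ and by the level sets $\xi_1\cdot\xi=\sigma$, to reduce $\sum_{Q\in\mc Q^{\tau}}f(Q)$ to a sum over \emph{rectangles} $Q\in\mc Q^{0}$ carrying only the residual divisibility constraint $\gcd(\xi_1-\xi_4)\mid\tau$. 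This reduction is what creates the structure the rest of the proof exploits: only for rectangles do the two edges through a vertex form a perpendicular cross, which is the object the Szemer\'edi--Trotter input actually counts (through a ternary --- not binary --- classification of crosses into types $1,2,3$ according to the richness of the two lines in $S_{j}$); and only for a rectangle does $\xi_4$ run along a line \emph{through} $\xi_1$, so that the successive values of $\gcd(\xi_1-\xi_4)$ are $1,2,\ldots,l$ and the harmonic sum $\sum_{t\le l}1/t\les\log l\les 2^{a_4/2}$ yields the extra decay behind \eqref{eq:1/M sum_t~M sum_Q^t f(Q) < |la|_2^4}.

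As written, your parameterization places $\xi_4$ on a line perpendicular to $v$ but \emph{offset} from $\xi_1$ whenever $\tau\neq 0$, so there is no cross at $\xi_1$, the single-rich-line hypothesis gives no leverage on that line, and the quantities $\gcd(\xi_1-\xi_4)$ are not the tame sequence $1,\ldots,l$ needed for the harmonic-sum gain. You also need the intermediate regime, not merely rich/sparse: it is exactly for $1\le a<j/2+C$ (type $2$) that Szemer\'edi--Trotter supplies the cross count $O(2^{2j-2a})$ that drives the whole estimate, while type $3$ ($a=0$) is disposed of by the trivial three-vertex count and type $1$ ($a\ge j/2+C$) collapses to two degrees of freedom by the hypothesis. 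Without the Cauchy--Schwarz reduction to rectangles and the resulting cross/gcd structure, the proposal does not yet assemble into a proof of the proposition.
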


\begin{proof}[Proof of Theorem \ref{thm:local Stri} (assuming Proposition
\ref{prop:inductive})]
Let $S\subset\Z^{2}$ be a bounded set. Let
$m$ be the least integer greater than $\log_{2}\#S$. Since $\frac{1}{\log\#S}\les\frac{1}{m}$,
to prove Theorem \ref{thm:local Stri}, we only need to show for $\phi\in L^{2}(\T^{2})$
that
\begin{equation}
\norm{e^{it\De}P_{S}\phi}_{L_{t,x}^{4}([0,\frac{1}{m}]\times\T^{2})}\les\norm{\phi}_{L^{2}(\T^{2})}.\label{eq:P_S phi Stri}
\end{equation}
Decomposing $\widehat{\phi}=\sum_{k=0}^{3}i^{k}\widehat{\phi}_{k}$, $\widehat{\phi}_{k}\ge0$,
it suffices to show that for $f:\Z^{2}\rightarrow[0,\infty)$ supported
in $S$,
\begin{equation}
\norm{e^{it\De}\F^{-1}f}_{L_{t,x}^{4}([0,\frac{1}{m}]\times\T^{2})}\les\norm f_{\ell^{2}(\Z^{2})}.\label{eq:F^-1 f Stri}
\end{equation}

We define a sequence $\left\{ f_{n}\right\} $ of functions $f_{n}:\Z^{2}\rightarrow[0,\infty),\supp(f_{n})\subset S$
inductively. Let $f_{0}:=f$. Given $n\in\N$ and a function $f_{n}$,
we choose an enumeration $\xi_{1},\xi_{2},\ldots$ of $\Z^{2}$ (which
may depend on $n$) such that $f_{n}(\xi_{1})\ge f_{n}(\xi_{2})\ge\ldots$.
Let $S_{j}^{0}:=\left\{ \xi_{2^{j}},\ldots,\xi_{2^{j+1}-1}\right\} $
and $\la_{j}:=2^{j/2}f_n(\xi_{2^{j}})$ for $j=0,\ldots,m$. We have
\begin{equation}
\#S_{j}^{0}=2^{j}\label{eq:=000023S^0_j=00003D2^j}
\end{equation}
and
\begin{align}
\norm{\la_{j}}_{\ell_{j\le m}^{2}}&=\norm{2^{j/2}f_{n}(\xi_{2^{j}})}_{\ell_{j\le m}^{2}}\label{eq:|la|_2<|f_n|_2}\\&
\les f_{n}(\xi_{1})+\norm{\sum_{j=1}^{m}f_{n}(\xi_{2^{j}})\chi_{\left\{ \xi_{2^{j-1}+1},\ldots,\xi_{2^{j}}\right\} }}_{\ell^{2}(\Z^{2})}\nonumber \\&
\les\norm{f_{n}}_{\ell^{2}(\Z^{2})}.\nonumber 
\end{align}
For $j=0,\ldots,m$, we define $E_{j}\subset S_{j}^{0}$ as the set
of intersections $\xi\in S_{j}^{0}$ of two lines $\ell_{1},\ell_{2}$
such that
\[
\#\left(\ell_{1}\cap S_{j}^{0}\right),\#\left(\ell_{2}\cap S_{j}^{0}\right)\ge2^{j/2+C}.
\]
By the Szemer\'{e}di-Trotter bound \eqref{eq:SzTr} and \eqref{eq:=000023S^0_j=00003D2^j}, we have
\begin{align}
\sqrt{\#E_{j}} & \le\#\left\{ \ell\subset\R^{2}:\ell\text{ is a line and }\#\left(\ell\cap S_{j}^{0}\right)\ge2^{j/2+C}\right\} \label{eq:E_j bound}\\
 & \les(\#S_{j}^{0})^{2}/(2^{j/2+C})^3+\#S_{j}^{0}/2^{j/2+C}\nonumber \\
 & \les2^{j/2-C}.\nonumber 
\end{align}

Let $f_{n+1}:\Z^{2}\rightarrow[0,\infty)$ be the function
\[
f_{n+1}:=f_{n}\chi_{E}, \quad E:=\bigcup_{j=0}^{m}E_{j}.
\]
Since $f_{n}(\xi)\le f_{n}(\xi_{2^{j}})=\la_{j}2^{-j/2}$ holds for
$\xi\in E_{j}\subset S_{j}^{0}$, by \eqref{eq:E_j bound} and \eqref{eq:|la|_2<|f_n|_2},
we have
\[
\norm{f_{n+1}}_{\ell^{2}(\Z^{2})}=\norm{f_{n}\chi_{E}}_{\ell^{2}(\Z^{2})}\les\norm{\la_{j}2^{-j/2}\cdot\sqrt{\#E_{j}}}_{\ell_{j\le m}^{2}}\les2^{-C}\norm{f_{n}}_{\ell^{2}(\Z^{2})}.
\]
Fixing $C\in\N$ as a big number gives
\[
\norm{f_{n+1}}_{\ell^{2}(\Z^{2})}\le\frac{1}{2}\norm{f_{n}}_{\ell^{2}(\Z^{2})},
\]
which implies
\begin{equation}
\norm{f_{n}}_{\ell^{2}(\Z^{2})}\le\frac{1}{2}\norm{f_{n-1}}_{\ell^{2}(\Z^{2})}\le\ldots\le2^{-n}\norm f_{\ell^{2}(\Z^{2})}.\label{eq:f_n _2 < 2^-n f_2}
\end{equation}

Let $S_{j}:=S_{j}^{0}\setminus E_{j}$. By the definition of $E_{j}$,
the function
\[
g_{n}:=\sum_{j=0}^{m}\la_{j}2^{-j/2}\chi_{S_{j}}
\]
satisfies the conditions for Proposition \ref{prop:inductive}.
Since $f_n(\xi)\le f_n(\xi_{2^{j}})=\la_{j}2^{-j/2}$ holds for $\xi\in S_{j}\subset S_{j}^{0}=\left\{ \xi_{2^{j}},\ldots,\xi_{2^{j+1}-1}\right\} $,
we have
\begin{equation}
h_{n}:=f_{n}-f_{n+1}=\sum_{j=0}^{m}f_{n}\chi_{S_{j}}\le\sum_{j=0}^{m}\la_{j}2^{-j/2}\chi_{S_{j}}=g_{n}.\label{eq:h_n < g_n}
\end{equation}

Denoting $T_{0}:=\frac{1}{m}$, by \eqref{eq:h_n < g_n} we have
\begin{align*}
\int_{0}^{T_{0}}\int_{\T^{2}}\left|e^{it\De}\F^{-1}h_{n}\right|^{4}dxdt & \le\frac{1}{T_{0}}\int_{0}^{2T_{0}}\int_{0}^{T}\int_{\T^{2}}\left|e^{it\De}\F^{-1}h_{n}\right|^{4}dxdtdT\\
 & \approx\frac{1}{T_{0}}\int_{0}^{2T_{0}}\int_{0}^{T}\F\left(\left|e^{it\De}\F^{-1}h_{n}\right|^{4}\right)(0)dtdT\\
 & \approx\frac{1}{T_{0}}\sum_{Q\in\mc Q}h_{n}\left(Q\right)\cdot\Re\int_{0}^{2T_{0}}\int_{0}^{T}e^{it\tau_{Q}}dtdT\\
 & \approx\sum_{Q\in\mc Q}h_{n}\left(Q\right)\cdot\frac{1-\cos\left(2T_{0}\tau_{Q}\right)}{T_{0}\tau_{Q}^{2}}\\
 & \les T_{0}\sum_{Q\in\mc Q^{0}}h_{n}(Q)+\sum_{\substack{\tau>0}
}\min\left\{ T_{0},\frac{1}{T_{0}\tau^{2}}\right\} \sum_{Q\in\mc Q^{\tau}}h_{n}(Q)\\
 & \les T_{0}\sum_{Q\in\mc Q^{0}}g_{n}(Q)+\sum_{\substack{\tau>0}
}\min\left\{ T_{0},\frac{1}{T_{0}\tau^{2}}\right\} \sum_{Q\in\mc Q^{\tau}}g_{n}(Q)
\end{align*}
and
\begin{align*}
 & \sum_{\substack{\tau>0}
}\min\left\{ T_{0},\frac{1}{T_{0}\tau^{2}}\right\} \sum_{Q\in\mc Q^{\tau}}g_{n}(Q)\\
 \les{}&\sum_{M\in2^{\N}}\min\left\{ T_{0}M,\frac{1}{T_{0}M}\right\} \frac{1}{M}\sum_{\tau\approx M}\sum_{Q\in\mc Q^{\tau}}g_{n}(Q)\\
  \les{}&\sup_{M\in2^{\N}}\frac{1}{M}\sum_{\tau\approx M}\sum_{Q\in\mc Q^{\tau}}g_{n}(Q),
\end{align*}
concluding by \eqref{eq:sum_Q^0  f(Q) < m |la|_2^4}, \eqref{eq:1/M sum_t~M sum_Q^t f(Q) < |la|_2^4},
and \eqref{eq:|la|_2<|f_n|_2} that
\begin{align}\label{eq:h_n stri}
\norm{e^{it\De}\F^{-1}h_{n}}_{L^{4}([0,T_{0}]\times\T^{2})}&=\left(\int_{0}^{T_{0}}\int_{\T^{2}}\left|e^{it\De}\F^{-1}h_{n}\right|^{4}dxdt\right)^{1/4}\\
&\les\norm{\la_{j}}_{\ell_{j\le m}^{2}}\les\norm{f_{n}}_{\ell^{2}(\Z^{2})}.\nonumber
\end{align}
Writing $f=\sum_{n=0}^{\infty}(f_{n}-f_{n+1})=\sum_{n=0}^{\infty}h_{n}$,
by \eqref{eq:h_n stri} and \eqref{eq:f_n _2 < 2^-n f_2}, we have
\begin{align*}
\norm{e^{it\De}\F^{-1}f}_{L_{t,x}^{4}\left([0,T_{0}]\times\T^{2}\right)} & \le\sum_{n=0}^{\infty}\norm{e^{it\De}\F^{-1}h_{n}}_{L_{t,x}^{4}\left([0,T_{0}]\times\T^{2}\right)}\\
 & \les\sum_{n=0}^{\infty}\norm{f_{n}}_{\ell^{2}(\Z^{2})}\\
 & \les\sum_{n=0}^{\infty}2^{-n}\norm f_{\ell^{2}(\Z^{2})}\les\norm f_{\ell^{2}(\Z^{2})},
\end{align*}
which is \eqref{eq:F^-1 f Stri} and therefore completes the proof of Theorem
\ref{thm:local Stri}.
\end{proof}

A \emph{cross} is a triple $(\xi,\ell_{1},\ell_{2})$ of two mutually
orthogonal lines $\ell_{1},\ell_{2}$ and their intersection $\xi$.
For $\{S_j\}_{j=0}^m$ as in Proposition \ref{prop:inductive}, we categorize crosses $\left(\xi,\ell_{1},\ell_{2}\right),\xi\in\cup_{j=0}^m S_j$ into three
types
\[
\begin{cases}
\text{Type 1} & \text{ if } a\ge j/2+C\\
\text{Type 2} & \text{ if } 1\le a<j/2+C\\
\text{Type 3} & \text{ if }a=0,
\end{cases}
\]
where $j$ is the index such that $\xi\in S_j$, and $a$ is the number
\[
a=\log_{2}\max\left\{ \#\left(\ell_{1}\cap S_{j}\right),\#\left(\ell_{2}\cap S_{j}\right)\right\}.
\]
Note that $a\in\{0\}\cup[1,\infty)$ since $\ell_1\cap S_j$ is nonempty.

Given a rectangle $\left(\xi_{1},\xi_{2},\xi_{3},\xi_{4}\right)$
of four distinct vertices, its vertex $\xi_{1}$ is called a \emph{vertex
of type }$\al$, $\al=1,2,3$, if the cross $(\xi_{1},\overleftrightarrow{\xi_{1}\xi_{2}},\overleftrightarrow{\xi_{1}\xi_{4}})$
is of type $\al$.

For $\al,\be=1,2,3$, we denote by $\mc Q_{\al,\be}^{0}$ the set
of rectangles $\left(\xi_{1},\xi_{2},\xi_{3},\xi_{4}\right)\in\mc Q^{0}$
of four distinct vertices $\xi_1,\xi_2,\xi_3,\xi_4\in\cup_{j=0}^m S_j$ such that $\xi_{1},\xi_{2}$ are type $\al$-vertices
and $\xi_{3},\xi_{4}$ are type $\be$-vertices. Although the union
of $\mc Q_{\al,\be}^{0}$ is only a proper subcollection of $\mc Q^{0}$,
the following lemma provides a reduction to counting rectangles in
$\mc Q_{\al,\be}^{0}$.
\begin{lem}
Let $f$ and $\left\{ S_{j}\right\}_{j=0}^m$ be as in Proposition \ref{prop:inductive}.
Let $\tau\ge0$ be an integer. We have
\begin{equation}
\sum_{Q\in\mc Q^{\tau}}f(Q)\les\max_{\al,\be=1,2,3}\sum_{\substack{Q=(\xi_{1},\xi_{2},\xi_{3},\xi_{4})\in\mc Q_{\al,\be}^{0}\\
\gcd(\xi_{1}-\xi_{4})\mid\tau
}
}f(Q)+\norm f_{\ell^{2}(\Z^{2})}^{4}.\label{eq:sum_Q^t f(Q) < max_ab sum Q_ab f(Q)+f_2^4}
\end{equation}
\end{lem}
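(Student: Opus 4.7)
The plan is to split $\mc Q^{\tau}$ into degenerate and non-degenerate parallelograms, absorb the degenerate contribution into the $\|f\|_{\ell^{2}}^{4}$ error term, and then assign cross types to the non-degenerate ones so that they can be compared to rectangles sharing the same edge along $\xi_{1}-\xi_{4}$, with the gcd constraint appearing automatically from the number-theoretic identity $\gcd(\xi_{1}-\xi_{4}) \mid \tau/2$.

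First I would deal with degenerate parallelograms, i.e.\ those where not all of $\xi_{1},\xi_{2},\xi_{3},\xi_{4}$ are distinct. The parallelogram condition $\xi_{1}+\xi_{3}=\xi_{2}+\xi_{4}$ forces the three possible collapses $\{\xi_{1}=\xi_{3},\xi_{2}=\xi_{4}\}$, $\{\xi_{1}=\xi_{2},\xi_{3}=\xi_{4}\}$, and $\{\xi_{1}=\xi_{4},\xi_{2}=\xi_{3}\}$, in each of which $f(Q)$ reduces to $f(\xi)^{2}f(\eta)^{2}$ for two free points. Summing over these free points gives $\|f\|_{\ell^{2}}^{4}$.

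For a non-degenerate $Q=(\xi_{1},\xi_{2},\xi_{3},\xi_{4}) \in \mc Q^{\tau}$ I would assign to each vertex a cross type by forming, at $\xi_{i}$, the cross consisting of the line $\overleftrightarrow{\xi_{i}\xi_{j}}$ through $\xi_{i}$ along one parallelogram edge and the line through $\xi_{i}$ perpendicular to that edge; these two lines are automatically orthogonal. Using the natural pairing coming from the edge $(\xi_{1},\xi_{2})$ for one pair and the parallel edge $(\xi_{3},\xi_{4})$ for the other, one gets a type $\al$ for the first pair and $\be$ for the second, and pigeonholing over the $9$ combinations $(\al,\be)$ costs only a universal constant and produces the $\max_{\al,\be}$ in the lemma.

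The core step is the conversion of the $\mc Q^{\tau}$ sum into one over $\mc Q_{\al,\be}^{0}$ with the constraint $\gcd(\xi_{1}-\xi_{4})\mid\tau$. Writing $e_{2}=\xi_{1}-\xi_{4}$ and $d=\gcd(e_{2})$, the identity $\tau/2 = \pm(\xi_{1}-\xi_{2})\cdot e_{2}$ together with $d\mid e_{2}$ componentwise forces $d\mid\tau/2$, so only pairs $(\xi_{1},\xi_{4})$ with $\gcd(\xi_{1}-\xi_{4})\mid\tau$ can contribute on the left-hand side. With $(\xi_{1},\xi_{4})$ so fixed, the admissible $\xi_{2}$ (with $\xi_{3}=\xi_{2}-e_{2}$ determined) lie on a union of two integer lines parallel to the perpendicular line $\ell_{\perp}$ through $\xi_{1}$, and a fixed lattice translation puts them in bijection with the integer points of $\ell_{\perp}$ that parametrize rectangles in $\mc Q^{0}$ with the same edge $(\xi_{1},\xi_{4})$. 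Combining this bijection with Cauchy--Schwarz on the paired factors $f(\xi_{2})f(\xi_{3})$ along these parallel lines then allows one to replace the parallelogram sum (with cross types $\al,\be$) by the corresponding rectangle sum in $\mc Q_{\al,\be}^{0}$.

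The main obstacle is that the lattice translation used in the bijection shifts the arguments of $f$, so the bounds on the parallelogram factors $f(\xi_{2})f(\xi_{3})$ cannot be replaced naively by the rectangle factors. The key point is that Cauchy--Schwarz combined with the cross-type classification absorbs this shift: the type-$\al$ and type-$\be$ hypotheses constrain how the mass of $f$ can concentrate along lines through $\xi_{1},\xi_{2}$ and $\xi_{3},\xi_{4}$, and this is precisely what rules out a blow-up along the perpendicular direction. Verifying that the resulting rectangle configurations truly lie in $\mc Q_{\al,\be}^{0}$ (respecting the pairing of types across shared edges) is the most delicate part of the argument and where the structural assumption of Proposition~\ref{prop:inductive} must be used carefully.
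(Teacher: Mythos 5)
Your proposal correctly spots the key divisibility observation $\gcd(\xi_1-\xi_4)\mid\tau$ and the role of pigeonholing over cross types, but the mechanism you propose for converting parallelograms into rectangles is not the right one and does not close the gap you yourself identify. You describe a ``lattice translation bijection'' taking the line of admissible $\xi_2$'s (at height $\sigma_2=\sigma_1\mp\tau/2$ along the direction $\xi=\xi_1-\xi_4$) onto the perpendicular line through $\xi_1$, and then claim that the cross-type classification ``absorbs'' the shift of the arguments of $f$. It does not: the cross types are purely a bookkeeping device so that summing over $9$ type combinations can be replaced by $\max_{\al,\be}$ at the cost of a constant; they give no control on how $f$ varies under a translation. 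The paper instead applies Cauchy--Schwarz in the height variable. Fix $\xi\ne 0$ with $\gcd(\xi)\mid\tau$ and set $a_\sigma=\sum_{(\xi_1,\xi_4)\in\mc E_\xi^\sigma}f(\xi_1)f(\xi_4)$, where $\mc E_\xi^\sigma$ consists of segments with difference $\xi$ and $\xi_1\cdot\xi=\sigma$. A nondegenerate parallelogram $Q\in\mc Q^\tau$ with edge $\xi$ corresponds to a pair $(\sigma_1,\sigma_2)$ with $\sigma_1-\sigma_2=\pm\tau/2$, so the sum over such $Q$ is $\sum_{\sigma_1-\sigma_2=\pm\tau/2}a_{\sigma_1}a_{\sigma_2}\le 2\sum_\sigma a_\sigma^2$. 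Expanding $a_\sigma^2$ yields pairs of segments at the \emph{same} height $\sigma$, i.e.\ rectangles with edge $\xi$, without any translation of the arguments of $f$ ever occurring. The pigeonhole over types is then inserted before squaring, so that both segments inherit the same $(\al,\be)$ labeling and the resulting rectangles land in $\mc Q^0_{\al,\be}$, with the one remaining degeneracy $(\xi_2,\xi_3)=(\xi_1,\xi_4)$ absorbed into $\|f\|_{\ell^2}^4$.

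Two further remarks. First, the structural hypothesis of Proposition~\ref{prop:inductive} (at most one heavy line through each $\xi\in S_j$) is \emph{not} used in this lemma at all, contrary to your claim that it must be ``used carefully'' here; only the definition of cross types is needed, and the hypothesis first enters in Case I of Lemma~\ref{lem:3ineq}. Second, your enumeration of degenerate parallelograms is incomplete as stated (e.g.\ $\xi_1=\xi_3$ alone does not force $\xi_2=\xi_4$), but the paper sidesteps this by only splitting off $\xi_1=\xi_4$ at the start and $(\xi_2,\xi_3)=(\xi_1,\xi_4)$ after expanding the square, each contributing $\|f\|_{\ell^2}^4$.
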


\begin{proof}
For $\xi\in\Z^{2}\setminus\left\{ 0\right\} $ and $\s\in\Z$,
we denote by $\mc E_{\xi}^{\s}$ the set of segments $(\xi_{1},\xi_{4})\in(\Z^{2})^{2}$
such that $\xi_{1}-\xi_{4}=\xi$ and $\xi_{1}\cdot\xi=\s$.

Since $\tau_{Q}=2\left|(\xi_{1}-\xi_{2})\cdot(\xi_{1}-\xi_{4})\right|$
is a multiple of $\gcd(\xi_{1}-\xi_{4})$ for any parallelogram $Q=(\xi_{1},\xi_{2},\xi_{3},\xi_{4})$
such that $\xi_{1}-\xi_{4}\neq0$, we have
\begin{align*}
\sum_{Q\in\mc Q^{\tau}}f(Q) & \les\sum_{\xi\in\Z^{2}\setminus\left\{ 0\right\} }\sum_{\substack{Q=(\xi_{1},\xi_{2},\xi_{3},\xi_{4})\in\mc Q^{\tau}\\
\xi_{1}-\xi_{4}=\xi
}
}f(Q)+\sum_{\xi_{1},\xi_{2}\in\Z^{2}}f(\xi_{1})^{2}f(\xi_{2})^{2}\\
 & \les\sum_{\substack{\xi\in\Z^{2}\setminus\left\{ 0\right\} \\
\gcd(\xi)\mid\tau
}
}\sum_{\substack{Q=(\xi_{1},\xi_{2},\xi_{3},\xi_{4})\in\mc Q^{\tau}\\
\xi_{1}-\xi_{4}=\xi
}
}f(Q)+\norm f_{\ell^{2}(\Z^{2})}^{4}
\end{align*}
and by Cauchy-Schwarz inequality,
{\allowdisplaybreaks
\begin{align*}
 & \sum_{\substack{\xi\in\Z^{2}\setminus\left\{ 0\right\} \\
\gcd(\xi)\mid\tau
}
}\sum_{\substack{Q=(\xi_{1},\xi_{2},\xi_{3},\xi_{4})\in\mc Q^{\tau}\\
\xi_{1}-\xi_{4}=\xi
}
}f(Q)\\
 & \les\sum_{\substack{\xi\in\Z^{2}\setminus\left\{ 0\right\} \\
\gcd(\xi)\mid\tau
}
}\sum_{\substack{\s_{1},\s_{2}\in\Z\\
\s_{1}-\s_{2}=\pm\tau/2
}
}\sum_{\substack{(\xi_{1},\xi_{4})\in\mc E_{\xi}^{\s_{1}}\\
(\xi_{2},\xi_{3})\in\mc E_{\xi}^{\s_{2}}
}
}f(\xi_{1})f(\xi_{4})f(\xi_{2})f(\xi_{3})\\
 & \les\sum_{\substack{\xi\in\Z^{2}\setminus\left\{ 0\right\} \\
\gcd(\xi)\mid\tau
}
}\sum_{\s\in\Z}\left(\sum_{(\xi_{1},\xi_{4})\in\mc E_{\xi}^{\s}}f(\xi_{1})f(\xi_{4})\right)^{2}\\
 & \les\max_{\al,\be=1,2,3}\sum_{\substack{\xi\in\Z^{2}\setminus\left\{ 0\right\} \\
\gcd(\xi)\mid\tau
}
}\sum_{\s\in\Z}\left(\sum_{\substack{(\xi_{1},\xi_{4})\in\mc E_{\xi}^{\s}\\
(\xi_1,\xi_1+\xi \R,\xi_1+\xi^\perp \R)\text{ is a cross of type }\al\\
(\xi_4,\xi_4+\xi \R,\xi_4+\xi^\perp \R)\text{ is a cross of type }\be
}
}f(\xi_{1})f(\xi_{4})\right)^{2}\\
 & \les\max_{\al,\be=1,2,3}\sum_{\substack{\xi\in\Z^{2}\setminus\left\{ 0\right\} \\
\gcd(\xi)\mid\tau
}
}\sum_{\s\in\Z}\sum_{\substack{
(\xi_1,\xi_4)\in\mc E^\s_\xi\\(\xi_{1},\xi_{2},\xi_{3},\xi_{4})\in\mc Q_{\al,\be}^{0}\text{ or }(\xi_2,\xi_3)=(\xi_1,\xi_4)
}
}f(\xi_1)f(\xi_4)f(\xi_2)f(\xi_3)\\
 & \les\max_{\al,\be=1,2,3}\sum_{\substack{\xi\in\Z^{2}\setminus\left\{ 0\right\} \\
\gcd(\xi)\mid\tau
}
}\left(\sum_{\substack{Q=(\xi_{1},\xi_{2},\xi_{3},\xi_{4})\in\mc Q_{\al,\be}^{0}\\
\xi_{1}-\xi_{4}=\xi
}
}f(Q)+\sum_{\xi_{1}-\xi_{4}=\xi}f(\xi_{1})^{2}f(\xi_{4})^{2}\right)\\
 & \les\max_{\al,\be=1,2,3}\sum_{\substack{Q=(\xi_{1},\xi_{2},\xi_{3},\xi_{4})\in\mc Q_{\al,\be}^{0}\\
\gcd(\xi_{1}-\xi_{4})\mid\tau
}
}f(Q)+\norm f_{\ell^{2}(\Z^{2})}^{4},
\end{align*}
}finishing the proof.
\end{proof}

There are three main inequalities to be shown. 
\begin{lem}\label{lem:3ineq}
Let $f$ and $\{\la_j\}_{j=0}^m$ be as in Proposition \ref{prop:inductive}.

In the cases
$(\al,\be)\neq(2,2)$ we have
\begin{equation}
\sum_{Q\in\mc Q_{\al,\be}^{0}}f(Q)\les\norm{\la_{j}}_{\ell_{j\le m}^{2}}^{4}.\label{eq:sum Q_ab f(Q) < la_2 ^4}
\end{equation}

In case $(\al,\be)=(2,2)$ we have
\begin{equation}
\sum_{Q\in\mc Q_{2,2}^{0}}f(Q)\les m\norm{\la_{j}}_{\ell_{j\le m}^{2}}^{4}\label{eq:sum Q_22 f(Q) < m*la_2^4}
\end{equation}
and
\begin{equation}
\sum_{Q=(\xi_{1},\xi_{2},\xi_{3},\xi_{4})\in\mc Q_{2,2}^{0}}\frac{1}{\gcd(\xi_{1}-\xi_{4})}f(Q)\les\norm{\la_{j}}_{\ell_{j\le m}^{2}}^{4}.\label{eq:sum Q_22 1/gcd f(Q) < la_2^4}
\end{equation}
\end{lem}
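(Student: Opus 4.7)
The plan is to handle the nine cases $(\al,\be)\in\{1,2,3\}^2$ separately by exploiting the cross-type constraints at each vertex. For a rectangle $Q=(\xi_1,\xi_2,\xi_3,\xi_4)\in\mc Q^0_{\al,\be}$, I write $\xi_i\in S_{j_i}$, so that $f(\xi_i)=\la_{j_i}2^{-j_i/2}$, and decompose
\[
\sum_{Q\in\mc Q^0_{\al,\be}}f(Q)=\sum_{j_1,\ldots,j_4\le m}\frac{\la_{j_1}\la_{j_2}\la_{j_3}\la_{j_4}}{2^{(j_1+j_2+j_3+j_4)/2}}\,N_{\al,\be}(j_1,j_2,j_3,j_4),
\]
where $N_{\al,\be}$ counts rectangles with the specified $S_{j_i}$-memberships. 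The goal is to bound $N_{\al,\be}$ using the cross-type information at each vertex and then to recombine with the weights $\la_{j_i}2^{-j_i/2}$ via Cauchy--Schwarz and the size bound $\#S_{j_l}\le 2^{j_l}$, recovering $\norm{\la_j}_{\ell^2_{j\le m}}^4$ (possibly with an $m$ factor).

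For $(\al,\be)\neq(2,2)$, at least one of $\al,\be$ is $1$ or $3$. For a type-$3$ vertex $\xi_i$, both edges at $\xi_i$ meet $S_{j_i}$ only at $\xi_i$, so the two neighbours in the rectangle must satisfy $j_{i\pm1}\neq j_i$; after parametrizing by $\xi_i$ together with two perpendicular edge directions and the lengths along them, a single Cauchy--Schwarz in one direction and the bounds $\#S_{j_l}\le 2^{j_l}$ yield the target $\norm{\la_j}_{\ell^2_{j\le m}}^4$. For a type-$1$ vertex $\xi_i$, the uniqueness clause of Proposition~\ref{prop:inductive} selects a single rich line through $\xi_i$ in $S_{j_i}$; splitting by whether that rich line is $\overleftrightarrow{\xi_i\xi_{i+1}}$ or $\overleftrightarrow{\xi_i\xi_{i-1}}$ (cyclically) forces the perpendicular edge to carry fewer than $2^{j_i/2+C}$ colinear points of $S_{j_i}$, and the analogous accounting again produces the target bound. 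The six mixed cases (with exactly one of $\al,\be$ equal to $2$) follow by combining these two arguments.

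The diagonal case $(\al,\be)=(2,2)$ is responsible for the logarithmic loss, since neither of the above mechanisms applies: at each type-$2$ vertex the richness $a_i\in[1,j_i/2+C)$ can range over $O(m)$ dyadic values. I would split the sum over each $a_i$ dyadically and, at each scale, invoke Szemer\'edi--Trotter (Proposition~\ref{prop:SzTr}) to bound the number of lines through $\xi_i$ carrying $\approx 2^{a_i}$ points of $S_{j_i}$. With care, three of the four dyadic sums should converge geometrically, leaving a single factor of $m$ and producing \eqref{eq:sum Q_22 f(Q) < m*la_2^4}. For the sharper estimate \eqref{eq:sum Q_22 1/gcd f(Q) < la_2^4} I would use the number-theoretic observation cited after Remark~\ref{rmk:stronger}: for $v=\xi_4-\xi_1$ with $g=\gcd(v)$, the lattice vectors perpendicular to $v$ are integer multiples of a vector of length $|v|/g$, so the admissible perpendicular translations $w=\xi_2-\xi_1=\xi_3-\xi_4$ in any bounded region are reduced in number by the factor $g/|v|$; inserting the weight $1/\gcd(\xi_1-\xi_4)$ creates exactly the room needed to absorb the outstanding dyadic sum and remove the $m$. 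The main obstacle I anticipate is organizing this Szemer\'edi--Trotter bookkeeping so that the four dyadic richness sums collapse to exactly one factor $m$ in \eqref{eq:sum Q_22 f(Q) < m*la_2^4} and to none in \eqref{eq:sum Q_22 1/gcd f(Q) < la_2^4}.
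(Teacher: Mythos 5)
Your high-level plan matches the paper's structure (split by the types of $\al,\be$; for $(2,2)$, dyadically decompose the richness exponents $a_i$, invoke Szemer\'edi--Trotter per scale, and aim for geometric decay in three of the four $a$-sums and a single factor $m$ from the fourth), but several key steps are either missing or incorrect, and without them the argument does not close. The central missing ingredient is the counting lemma that drives the whole $(2,2)$ case: once a cross $(\xi_1,\ell_{12},\ell_{14})$ with richness $\approx 2^{a_1}$ in $S_{j_1}$ is fixed --- and Szemer\'edi--Trotter gives $O(2^{2j_1-2a_1})$ such crosses --- the rectangle is completed by choosing $\xi_2\in\ell_{12}\cap S_{j_2}$ (at most $O(2^{a_2})$ options) and then either $\xi_4\in\ell_{14}\cap S_{j_4}$ ($O(2^{a_4})$ options) or $\xi_3$ on the parallel line through $\xi_2$ ($O(2^{a_3})$ options). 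This yields $\#\mc Q^0(\overrightarrow{j},\overrightarrow{a})\les 2^{2j_1-2a_1+a_2+a_4}$ and $\les 2^{2j_1-2a_1+a_2+a_3}$, together with their cyclic relabellings. It is precisely the interpolation of this \emph{family} of inequalities that produces the exponential decay in each $|j_k-j_{k+1}|$ and $|a_k-a_{k+1}|$, so that only one free $a$-sum of length $O(m)$ survives. Invoking Szemer\'edi--Trotter to bound lines through $\xi_i$ is only half the mechanism; without the cross-to-rectangle completion count (and the interpolation that converts it into off-diagonal decay) you cannot reduce four $a$-sums to a single factor of $m$, nor to none with the $1/\gcd$ weight.

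Two further points are off-target. For $\al=1$, the paper does not count points on the non-rich edge at all: the observation is that uniqueness of the rich line through $\xi_1$ makes the rectangle \emph{determined} by $(\xi_1,\xi_3)$ (and likewise by $(\xi_2,\xi_4)$), after which a single Cauchy--Schwarz gives $\sum_{\xi_1,\xi_3}(f(\xi_1)f(\xi_3))^2\les\norm{f}_{\ell^2}^4$; ``forcing the perpendicular edge to have few points and doing analogous accounting'' is neither the correct mechanism nor obviously sufficient. For the $1/\gcd$ weight, the paper exploits the $\gcd$ \emph{along} $\ell_{14}$: writing $\ell_{14}\cap S_{j_4}\setminus\{\xi_1\}=\{\xi_1+n_r v_0\}_{r\le l}$ with $v_0$ primitive and $n_r$ distinct integers gives $\sum_r 1/\gcd(\xi_1-\xi_4^r)=\sum_r 1/|n_r|\les\log l\les 2^{a_4/2}$, which upgrades $2^{2j_1-2a_1+a_2+a_4}$ to $2^{2j_1-2a_1+a_2+a_4/2}$ and supplies, after the same interpolation, the extra $2^{-\delta a_4}$ needed to kill the free $a_4$-sum. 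Your proposed use of the spacing of lattice points \emph{perpendicular} to $\xi_4-\xi_1$ is not the relevant direction: the count of $\xi_2$ is governed by the richness $a_2$ of $\ell_{12}$, not by the lattice spacing on that line, so inserting $1/\gcd$ there does not produce a controllable saving. Finally, for $\al=3$ or $\be=3$ the relevant fact is not that neighbours must lie in a different $S_j$, but that a type-$3$ cross forces only $O(1)$ choices of $\xi_3$ on $\ell_{23}$ once $\xi_1,\xi_2$ are fixed, giving $q_{j_1,\ldots,j_4}\les\min_k 2^{j_k+j_{k+1}}$, and one then needs the elementary inequality $\min_k\{j_k+j_{k+1}\}\le\tfrac12\sum j_k-\tfrac1{100}(|j_1-j_3|+|j_2-j_4|)$ to sum against $\la_{j_1}\cdots\la_{j_4}$.
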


\begin{proof}[Proof of Proposition \ref{prop:inductive} assuming Lemma \ref{lem:3ineq}]

We first prove \eqref{eq:sum_Q^0  f(Q) < m |la|_2^4}, which concerns the case $\tau=0$. By \eqref{eq:sum_Q^t f(Q) < max_ab sum Q_ab f(Q)+f_2^4}, \eqref{eq:sum Q_ab f(Q) < la_2 ^4},
and \eqref{eq:sum Q_22 f(Q) < m*la_2^4}, we have
\[
\sum_{Q\in\mc Q^{0}}f(Q)\les\max_{\al,\be=1,2,3}\sum_{Q=(\xi_{1},\xi_{2},\xi_{3},\xi_{4})\in\mc Q_{\al,\be}^{0}}f(Q)+\norm f_{\ell^{2}(\Z^{2})}^{4}\les m\norm{\la_{j}}_{\ell_{j\le m}^{2}}^{4},
\]
which is just \eqref{eq:sum_Q^0  f(Q) < m |la|_2^4}.

Now we prove \eqref{eq:1/M sum_t~M sum_Q^t f(Q) < |la|_2^4}, which is for $\tau\neq 0$. By \eqref{eq:sum_Q^t f(Q) < max_ab sum Q_ab f(Q)+f_2^4}, for $M\in2^{\N}$,
we have
\[
\frac{1}{M}\sum_{\tau\approx M}\sum_{Q\in\mc Q^{\tau}}f(Q)\les\frac{1}{M}\max_{\al,\be=1,2,3}\sum_{\tau\approx M}\sum_{\substack{Q=(\xi_{1},\xi_{2},\xi_{3},\xi_{4})\in\mc Q_{\al,\be}^{0}\\
\gcd(\xi_{1}-\xi_{4})\mid\tau
}
}f(Q)+\norm f_{\ell^{2}(\Z^{2})}^{4},
\]
and for $\al,\be=1,2,3$, we have
\begin{align*}
 & \frac{1}{M}\sum_{\tau\approx M}\sum_{\substack{Q=(\xi_{1},\xi_{2},\xi_{3},\xi_{4})\in\mc Q_{\al,\be}^{0}\\
\gcd(\xi_{1}-\xi_{4})\mid\tau
}
}f(Q)\\
 & =\frac{1}{M}\sum_{\tau\approx M}\sum_{Q=(\xi_{1},\xi_{2},\xi_{3},\xi_{4})\in\mc Q_{\al,\be}^{0}}1_{\gcd(\xi_{1}-\xi_{4})\mid\tau}\cdot f(Q)\\
 & =\sum_{Q=(\xi_{1},\xi_{2},\xi_{3},\xi_{4})\in\mc Q_{\al,\be}^{0}}\left(\frac{1}{M}\sum_{\tau\approx M}1_{\gcd(\xi_{1}-\xi_{4})\mid\tau}\right)\cdot f(Q)\\
 & \les\sum_{Q=(\xi_{1},\xi_{2},\xi_{3},\xi_{4})\in\mc Q_{\al,\be}^{0}}\frac{1}{\gcd(\xi_{1}-\xi_{4})}\cdot f(Q),
\end{align*}
which is $O(\norm{\la_{j}}_{\ell^{2}_{j\le m}}^{4})$ by \eqref{eq:sum Q_ab f(Q) < la_2 ^4}
and \eqref{eq:sum Q_22 1/gcd f(Q) < la_2^4}, and finishes the proof
of \eqref{eq:1/M sum_t~M sum_Q^t f(Q) < |la|_2^4}.
\end{proof}

Before turning to the proof of Lemma \ref{lem:3ineq}, we consider two preparatory lemmas, where we use the following notation:

For vectors $\overrightarrow{j}=(j_1,j_2,j_3,j_4)\in \N^4$ and $\overrightarrow{a}=(a_1,a_2,a_3,a_4)\in\N^4$, we denote by $\mc Q^{0}(\overrightarrow{j},\overrightarrow{a})$ the set of rectangles $(\xi_{1},\xi_{2},\xi_{3},\xi_{4})\in\mc Q^{0}\cap\left(S_{j_{1}}\times S_{j_{2}}\times S_{j_{3}}\times S_{j_{4}}\right)$
of four distinct vertices such that
\begin{equation}
2^{a_{k}}\le\max\left\{ \#\left(\overleftrightarrow{\xi_{k}\xi_{k+1}}\cap S_{j_{k}}\right),\#\left(\overleftrightarrow{\xi_{k}\xi_{k-1}}\cap S_{j_{k}}\right)\right\} <2^{a_{k}+1},\label{eq:a_k}
\end{equation}
where the cyclic convention on index $\xi_{4l+k}=\xi_{k},l\in\Z$
is used.

\begin{figure}[!h]
\centering
\begin{tikzpicture}
\centering
\coordinate [label=above left:$\xi_1$]  (1) at (0,2.5);
\coordinate [label=below left:$\xi_2$] (2) at (0,0);
\coordinate [label=below right:$\xi_3$] (3) at (4,0);
\coordinate [label=above right:$\xi_4$] (4) at (4,2.5);

\coordinate [label=above left:$S_{j_1}$]  (S1) at (-0.75,2.5);
\coordinate [label=below left:$S_{j_2}$] (S2) at (-0.75,0);
\coordinate [label=below right:$S_{j_3}$] (S3) at (4.75,0);
\coordinate [label=above right:$S_{j_4}$] (S4) at (4.75,2.5);
\coordinate (1-) at (-2,2.5);
\coordinate (4+) at (6,2.5);
\coordinate (2-) at (-2,0);
\coordinate (1++) at (0,2.5+2);
\coordinate (2--) at (0,0-2);
\coordinate (4++) at (4,2.5+2);
\coordinate (3--) at (4,0-2);
\coordinate (3+) at (6,0);
\draw (1) -- (2);
\draw (2-) -- (3+);
\draw (1-) -- (4+);
\draw (1++) -- (2--);
\draw (4++) -- (3--);
\foreach \x in {(1), (2), (3), (4)}
{
    \fill \x circle[radius=2pt];
}
\foreach \x in {(1), (2), (3), (4)}
{
    \draw \x circle[radius=20pt];
}
\end{tikzpicture}
\caption{Rectangle $(\xi_1,\xi_2,\xi_3,\xi_4)\in \mc Q^{0}(\overrightarrow{j},\overrightarrow{a})$}
\end{figure}

\begin{lem}\label{lem:j1+j2+a3}
Let $\left\{ S_{j}\right\} _{j=0}^{m},m\ge 1$ be as in Proposition
\ref{prop:inductive}. Let $j_{1},j_{2},j_{3},j_{4},a_{3}\ge0$
be integers. Then, the number of rectangles $(\xi_1,\xi_2,\xi_3,\xi_4)\in\mc Q^0\cap (S_{j_1}\times S_{j_2}\times S_{j_3}\times S_{j_4})$ of four distinct vertices such that
\[
\#\left(\overleftrightarrow{\xi_2 \xi_3}\cap S_{j_3}\right)<2^{a_3+1}
\]
is $O(2^{j_{1}+j_{2}+a_{3}})$.
\end{lem}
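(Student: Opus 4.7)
The plan is to count by sequentially choosing the vertices $\xi_1, \xi_2, \xi_3$, from which $\xi_4$ is then forced by the parallelogram relation $\xi_4 = \xi_1 + \xi_3 - \xi_2$. The rectangle condition $\tau_Q = 0$, i.e.\ $(\xi_1 - \xi_2)\cdot(\xi_1 - \xi_4) = 0$, will be used to pin $\xi_3$ down to a specific line through $\xi_2$.

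\textbf{Step 1.} First I would choose $\xi_1 \in S_{j_1}$. Since $\#S_{j_1} \le 2^{j_1}$, there are at most $2^{j_1}$ options.

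\textbf{Step 2.} Next I would choose $\xi_2 \in S_{j_2}$, contributing at most $2^{j_2}$ options. The four vertices are distinct, so in particular $\xi_1 \neq \xi_2$, and hence the direction of the side $(\xi_1, \xi_2)$, namely the vector $\xi_1 - \xi_2$, is well-defined and nonzero.

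\textbf{Step 3.} In a rectangle, the side $(\xi_2, \xi_3)$ is perpendicular to the side $(\xi_1, \xi_2)$. Thus $\xi_3$ is forced to lie on the unique line $\ell$ through $\xi_2$ in the direction $(\xi_1 - \xi_2)^\perp$; this line is exactly $\overleftrightarrow{\xi_2 \xi_3}$. By hypothesis, $\#(\ell \cap S_{j_3}) < 2^{a_3 + 1}$, so there are fewer than $2^{a_3 + 1}$ choices for $\xi_3 \in S_{j_3}$ on this line.

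\textbf{Step 4.} Finally, $\xi_4 = \xi_1 + \xi_3 - \xi_2$ is uniquely determined (and the condition $\xi_4 \in S_{j_4}$ can only cut down the count further). Multiplying the three bounds yields $O(2^{j_1} \cdot 2^{j_2} \cdot 2^{a_3}) = O(2^{j_1 + j_2 + a_3})$ rectangles, which is the desired bound.

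The argument is essentially combinatorial book-keeping; the one place I would double-check carefully is Step 3, to be sure that the perpendicularity constraint together with the already chosen $\xi_2$ really does force $\xi_3$ onto the specific line $\overleftrightarrow{\xi_2 \xi_3}$ (so that the hypothesis on $\#(\overleftrightarrow{\xi_2 \xi_3} \cap S_{j_3})$ is directly applicable), which indeed follows from the rectangle relation $(\xi_1 - \xi_2) \cdot (\xi_3 - \xi_2) = (\xi_1 - \xi_2) \cdot (\xi_1 - \xi_4) = 0$ combined with $\xi_1 \neq \xi_2$.
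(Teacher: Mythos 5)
Your proof is correct and takes essentially the same approach as the paper: choose $\xi_1,\xi_2$ freely (contributing $O(2^{j_1+j_2})$), observe that the rectangle condition forces $\xi_3$ onto the line through $\xi_2$ perpendicular to $\overleftrightarrow{\xi_1\xi_2}$, which is exactly $\overleftrightarrow{\xi_2\xi_3}$, and invoke the hypothesis to get $O(2^{a_3})$ choices for $\xi_3$, with $\xi_4$ then determined. Your careful check in Step 3 that $(\xi_1-\xi_2)\cdot(\xi_3-\xi_2)=(\xi_1-\xi_2)\cdot(\xi_1-\xi_4)=0$ is precisely the justification the paper uses implicitly.
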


\begin{proof}
There are at most $\#S_{j_1}\cdot\#S_{j_2}=O(2^{j_{1}+j_{2}})$ possible choices of $(\xi_{1},\xi_{2})\in S_{j_{1}}\times S_{j_{2}}$.
Once the pair of two vertices $(\xi_{1,}\xi_{2})\in S_{j_{1}}\times S_{j_{2}}$
is fixed, the third vertex $\xi_{3}$ should lie on the line $\ell_{23}\ni\xi_{2}$
orthogonal to $\overleftrightarrow{\xi_{1}\xi_{2}}$ and we require
\[
\#\left(\ell_{23}\cap S_{j_{3}}\right)=\#\left(\overleftrightarrow{\xi_{2}\xi_{3}}\cap S_{j_{3}}\right)<2^{a_{3}+1},
\]
so there are only $O(2^{a_{3}})$ possible choices of $\xi_{3}\in\ell_{23}$,
which then uniquely determines a rectangle. Therefore, we have $O(2^{j_{1}+j_{2}}\cdot2^{a_{3}})=O(2^{j_{1}+j_{2}+a_{3}})$ such rectangles.

\begin{figure}[!h]
\centering
\begin{tikzpicture}
\centering
\coordinate [label=above left:$\xi_1$]  (1) at (0,2.5);
\coordinate [label=below left:$\xi_2$] (2) at (0,0);
\coordinate [label=below right:$\xi_3$] (3) at (4,0);

\coordinate [label=above left:$S_{j_1}$]  (S1) at (-0.75,2.5);
\coordinate [label=below left:$S_{j_2}$] (S2) at (-0.75,0);
\coordinate [label=below right:$S_{j_3}$] (S3) at (4.75,0);
\coordinate [label=left:$\ell_{23}$](2-) at (-2,0);
\coordinate (3+) at (6,0);
\draw (1) -- (2);
\draw (2-) -- (3+);
\foreach \x in {(1), (2), (3)}
{
    \fill \x circle[radius=2pt];
    \draw \x circle[radius=20pt];
}
\end{tikzpicture}
\caption{Choice of $\xi_1,\xi_2,\xi_3$ in the proof of Lemma \ref{lem:j1+j2+a3}}
\end{figure}

\end{proof}
The following lemma is useful in the case that $\xi_{1}$ is a vertex of type $2$.
\begin{lem}\label{lem:counting}
Let $\left\{ S_{j}\right\} _{j=0}^{m},m\ge 1$ be as in Proposition
\ref{prop:inductive}. Let $j_{1},j_{2},j_{3},j_{4}$, $a_{1},a_{2},a_{3},a_{4}\ge0$
be integers. Assume that
\begin{equation}
1\le a_{1}<j_{1}/2+C.\label{eq:a1 bound}
\end{equation}
We have
\begin{equation}
\#\mc Q^{0}(\overrightarrow{j},\overrightarrow{a}) \les 2^{2j_{1}-2a_{1}+a_{2}+a_{4}},\label{eq:-2a1+a2+a4}
\end{equation}
\begin{equation}
\#\mc Q^{0}(\overrightarrow{j},\overrightarrow{a}) \les 2^{2j_{1}-2a_{1}+a_{2}+a_{3}},\label{eq:-2a1+a2+a3}
\end{equation}
and
\begin{equation}
\sum_{(\xi_{1},\xi_{2},\xi_{3},\xi_{4})\in\mc Q^{0}(\overrightarrow{j},\overrightarrow{a})  }\frac{1}{\gcd(\xi_{1}-\xi_{4})}\les 2^{2j_{1}-2a_{1}+a_{2}+a_{4}/2}.\label{eq:-2a1+a2+a4/2}
\end{equation}
\end{lem}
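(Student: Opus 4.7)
The plan is to enumerate the rectangles in $\mc Q^{0}(\overrightarrow{j},\overrightarrow{a})$ by choosing vertices one at a time along lines whose $S_{j_{k}}$-incidences are controlled by the $a_{k}$ data. By the definition of $a_{1}$, at least one of the two lines $\ell_{12}:=\overleftrightarrow{\xi_{1}\xi_{2}}$, $\ell_{14}:=\overleftrightarrow{\xi_{1}\xi_{4}}$ meets $S_{j_{1}}$ in at least $2^{a_{1}}$ points; I would split into two subcases accordingly, which are symmetric under $\xi_{2}\leftrightarrow\xi_{4}$ (and $a_{2}\leftrightarrow a_{4}$), so it suffices to treat the case where $\ell_{12}$ is the popular line.

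Proposition \ref{prop:SzTr} applied to $S_{j_{1}}$ with $k=2^{a_{1}}\ge 2$ bounds the number of lines hitting $S_{j_{1}}$ in at least $2^{a_{1}}$ points by $\lesssim 2^{2j_{1}-3a_{1}}+2^{j_{1}-a_{1}}$, and the hypothesis $a_{1}<j_{1}/2+C$ forces the first term to dominate up to the constant $2^{2C}$, giving $\lesssim 2^{2j_{1}-3a_{1}}$ candidates for $\ell_{12}$. Choosing $\xi_{1}\in\ell_{12}\cap S_{j_{1}}$ contributes $\le 2^{a_{1}+1}$ factors and fixes the perpendicular line $\ell_{14}$ through $\xi_{1}$. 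The definitions of $a_{2},a_{3},a_{4}$ yield $\#(\ell_{12}\cap S_{j_{2}})<2^{a_{2}+1}$, $\#(\ell_{14}\cap S_{j_{4}})<2^{a_{4}+1}$, and $\#(\ell_{23}\cap S_{j_{3}})<2^{a_{3}+1}$, where $\ell_{23}$ denotes the perpendicular to $\ell_{12}$ through $\xi_{2}$. For \eqref{eq:-2a1+a2+a4} I would then choose $\xi_{2}\in\ell_{12}\cap S_{j_{2}}$ and $\xi_{4}\in\ell_{14}\cap S_{j_{4}}$ (forcing $\xi_{3}=\xi_{2}+\xi_{4}-\xi_{1}$); for \eqref{eq:-2a1+a2+a3} I would choose $\xi_{2}$ and then $\xi_{3}\in\ell_{23}\cap S_{j_{3}}$ (forcing $\xi_{4}$). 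Multiplying the factors gives $2^{2j_{1}-2a_{1}+a_{2}+a_{4}}$ and $2^{2j_{1}-2a_{1}+a_{2}+a_{3}}$, respectively.

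The main obstacle is the weighted bound \eqref{eq:-2a1+a2+a4/2}, since the naive count $\le 2^{a_{4}+1}$ for $\xi_{4}$ only produces $2^{a_{4}}$, whereas the claim requires $2^{a_{4}/2}$. To recover the square-root gain I would exploit the lattice structure on $\ell_{14}$: writing $\ell_{14}=\{\xi_{1}+kw:k\in\Z\}$ for the primitive lattice vector $w$ spanning $\ell_{14}$, every $\xi_{4}\in\ell_{14}\cap S_{j_{4}}$ equals $\xi_{1}+kw$ for some $k\in\Z\setminus\{0\}$ with $\gcd(\xi_{1}-\xi_{4})=|k|$. Let $K\subset\Z\setminus\{0\}$ denote the set of such $k$'s, so $|K|<2^{a_{4}+1}$. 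Since the elements of $K$ are distinct nonzero integers, Cauchy--Schwarz combined with $\sum_{k\neq 0}k^{-2}=\pi^{2}/3$ gives
\[
\sum_{k\in K}\frac{1}{|k|}\le |K|^{1/2}\Bigl(\sum_{k\in K}\frac{1}{k^{2}}\Bigr)^{1/2}\lesssim 2^{a_{4}/2}.
\]
Combining with the earlier counts yields \eqref{eq:-2a1+a2+a4/2}; this Cauchy--Schwarz step, which converts the trivial incidence bound on $\xi_{4}$ into a weighted sum over distinct nonzero integers, is the only non-routine input.
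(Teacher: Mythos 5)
Your proposal is correct and follows essentially the same route as the paper: Szemer\'edi--Trotter bounds the number of candidate lines through $S_{j_1}$ by $O(2^{2j_1-3a_1})$ (the hypothesis $a_1<j_1/2+C$ makes the first term dominate), each such line yields $O(2^{a_1})$ choices of $\xi_1$, and then the definitions of $a_2,a_3,a_4$ bound the number of choices of $\xi_2\in\ell_{12}\cap S_{j_2}$, $\xi_4\in\ell_{14}\cap S_{j_4}$, or $\xi_3\in\ell_{23}\cap S_{j_3}$, exactly as in the paper. The only genuine difference is in the weighted bound \eqref{eq:-2a1+a2+a4/2}: the paper observes that since the admissible $\xi_4$'s lie on a single rational line through $\xi_1$, the quantities $\gcd(\xi_1-\xi_4)$ run (with multiplicity at most two) over distinct positive integers, so the sum is at most a harmonic sum $\sum_{r\le l}1/r\lesssim\log l\lesssim 2^{a_4/2}$; you instead apply Cauchy--Schwarz against $\sum_{k\neq 0}k^{-2}$ to land on $|K|^{1/2}\lesssim 2^{a_4/2}$ directly. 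Both are valid and give the same exponent (the Cauchy--Schwarz route is a touch cleaner since it avoids the auxiliary inequality $\log l\lesssim 2^{a_4/2}$). One minor remark: the ``symmetry under $\xi_2\leftrightarrow\xi_4$'' framing you invoke to reduce to the case where $\ell_{12}$ is the popular line is not quite needed and could mislead, since \eqref{eq:-2a1+a2+a3} is not symmetric in $a_2,a_4$; but the argument is unaffected because once the cross $(\xi_1,\ell_{12},\ell_{14})$ is fixed, the bounds $\#(\ell_{12}\cap S_{j_2})<2^{a_2+1}$, $\#(\ell_{23}\cap S_{j_3})<2^{a_3+1}$, $\#(\ell_{14}\cap S_{j_4})<2^{a_4+1}$ hold regardless of which arm of the cross was the popular one.
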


We note that the assumption \eqref{eq:a1 bound} is a priori necessary if $\xi_{1}$ is a vertex of type $2$.
\begin{proof}
By \eqref{eq:SzTr}, the number of lines $\ell$ such that $2^{a_{1}}\le\#(\ell\cap S_{j_{1}})<2^{a_1+1}$
is $O(2^{2j_{1}}\cdot2^{-3a_{1}}+2^{j_{1}}\cdot2^{-a_{1}})=O(2^{2j_{1}-3a_{1}})$, and for each such $\ell$, we have $O(2^{a_1})$ number of points $\xi_1\in\ell\cap S_{j_1}$.
Thus, there exist at most $O(2^{2j_{1}-2a_{1}})$ crosses $(\xi_{1},\ell_{12},\ell_{14})$
such that
\[
2^{a_{1}}\le\max\left\{ \#(\ell_{12}\cap S_{j_{1}}),\#(\ell_{14}\cap S_{j_{1}})\right\} <2^{a_{1}+1}.
\]
For such a cross $(\xi_{1},\ell_{12},\ell_{14})$ to be a corner of
a rectangle in $\mc Q^{0}(\overrightarrow{j},\overrightarrow{a})  $, for \eqref{eq:a_k}
we require further that
\begin{equation}
\#(\ell_{12}\cap S_{j_{2}})<2^{a_{2}+1}\label{eq:12}
\end{equation}
and
\begin{equation}
\#(\ell_{14}\cap S_{j_{4}})<2^{a_{4}+1}.\label{eq:14}
\end{equation}

By \eqref{eq:12}, there exist at most $O(2^{a_{2}})$ choices of
vertices $\xi_{2}\in\ell_{12}\cap S_{j_{2}}$.

\begin{figure}[!h]
\centering
\begin{tikzpicture}
\centering
\coordinate [label=above left:$\xi_1$]  (1) at (0,2.5);
\coordinate [label=below left:$\xi_2$] (2) at (0,0);
\coordinate  (3) at (4,0);
\coordinate  (4) at (4,2.5);

\coordinate [label=above left:$S_{j_1}$]  (S1) at (-0.75,2.5);
\coordinate [label=below left:$S_{j_2}$] (S2) at (-0.75,0);
\coordinate [label=below right:$S_{j_3}$] (S3) at (4.75,0);
\coordinate [label=above right:$S_{j_4}$] (S4) at (4.75,2.5);
\coordinate [label=left:$\ell_{14}$](1-) at (-2,2.5);
\coordinate (4+) at (6,2.5);
\coordinate [label=left:$\ell_{23}$](2-) at (-2,0);
\coordinate [label=above:$\ell_{12}$](1++) at (0,2.5+2);
\coordinate (2--) at (0,0-2);
\coordinate (3+) at (6,0);
\draw (1) -- (2);
\draw (2-) -- (3+);
\draw (1-) -- (4+);
\draw (1++) -- (2--);
\foreach \x in {(1), (2)}
{
    \fill \x circle[radius=2pt];
}
\foreach \x in {(1), (2), (3), (4)}
{
    \draw \x circle[radius=20pt];
}
\end{tikzpicture}
\caption{Choice of $\xi_1$ and $\xi_2$ in the proof of Lemma \ref{lem:counting}}
\end{figure}

Having fixed $\xi_{1}$
and $\xi_{2}$, we choose either $\xi_{3}$ or $\xi_{4}$ as follows,
which then uniquely determines a rectangle $(\xi_{1},\xi_{2},\xi_{3},\xi_{4})\in\mc Q^{0}$.
\begin{itemize}
\item \emph{Choice of $\xi_{4}$. }Since the choice of $\xi_{4}\in\ell_{14}\cap S_{j_{4}}$
in advance uniquely determines a rectangle, by \eqref{eq:14}, we have
\eqref{eq:-2a1+a2+a4}. Also, labeling $\ell_{14}\cap S_{j_{4}}\setminus\left\{ \xi_{1}\right\} =:\left\{ \xi_{4}^{1},\ldots,\xi_{4}^{l}\right\} ,l<2^{a_{4}+1}$,
we have
\[
\sum_{r=1}^{l}\frac{1}{\gcd(\xi_{1}-\xi_{4}^{r})}\les\frac{1}{1}+\cdots+\frac{1}{l}\les\log l\les2^{a_{4}/2},
\]
which implies \eqref{eq:-2a1+a2+a4/2}.
\item \emph{Choice of $\xi_{3}$.} We can also determine a rectangle by
choosing $\xi_{3}\in\ell_{23}\cap S_{j_{3}}$, where $\ell_{23}\ni\xi_{2}$
is the line parallel with $\ell_{14}$. To form a rectangle in $\mc Q^{0}(\overrightarrow{j},\overrightarrow{a})  $,
we require
\[
\#(\ell_{23}\cap S_{j_{3}})=\#(\overleftrightarrow{\xi_{2}\xi_{3}}\cap S_{j_{3}})<2^{a_{3}+1},
\]
so there are at most $O(2^{a_{3}})$ choices of such vertices $\xi_{3}$.
Thus, we have \eqref{eq:-2a1+a2+a3}.\qedhere
\end{itemize}
\end{proof}

We can now lay the last brick of the proof of Proposition \ref{prop:inductive}.
\begin{proof}[Proof of Lemma \ref{lem:3ineq}]
We split the proof  into the cases
(i) $\al=1$ (or $\be=1$), (ii) $(\al,\be)=(2,2)$, (iii) $(\al,\be)=(3,3)$, and (iv) $(\al,\be)=(2,3)$ (or $(3,2)$).\\

\emph{Case I: $\al=1$ (or $\be=1$).}

For $\xi_{1}\in S_{j_{1}},j_{1}\in\N$, by the assumption of Proposition
\ref{prop:inductive} there exists at most one line $\ell_{\xi_{1}}\ni\xi_1$
such that $\#(\ell_{\xi_{1}}\cap S_{j_{1}})\ge2^{j_1/2+C}$. Thus, for any
rectangle $Q=(\xi_{1},\xi_{2},\xi_{3},\xi_{4})\in\mc Q_{1,\be}^{0}$,
to which the inequality 
\[\max\left\{ \#\left(\overleftrightarrow{\xi_{1}\xi_{2}}\cap S_{j_{1}}\right),\#\left(\overleftrightarrow{\xi_{1}\xi_{4}}\cap S_{j_{1}}\right)\right\} \ge2^{j_1/2+C}\]
applies since $\xi_{1}$ is of type $\al=1$, we have either
$\xi_{2}\in\ell_{\xi_{1}}$ or $\xi_{4}\in\ell_{\xi_{1}}$. We conclude
that for each pair of points $(\xi_{1},\xi_{3})\in(\Z^{2})^{2}$ such
that $\xi_{1}\neq\xi_{3}$, there is only one possible choice of the
other two vertices $\left\{ \xi_{2},\xi_{4}\right\} $ such that $Q=(\xi_{1},\xi_{2},\xi_{3},\xi_{4})\in\mc Q_{1,\be}^{0}$,
and similar for $(\xi_{2},\xi_{4})$. By Cauchy-Schwarz inequality,
we have
\begin{align*}
\sum_{Q\in\mc Q_{1,\be}^{0}}f(Q) & =\sum_{Q=(\xi_{1},\xi_{2},\xi_{3},\xi_{4})\in\mc Q_{1,\be}^{0}}f(\xi_{1})f(\xi_{3})\cdot f(\xi_{2})f(\xi_{4})\\
 & \les\sum_{\xi_{1},\xi_{3}\in\Z^{2}}\left(f(\xi_{1})f(\xi_{3})\right)^{2}\\
 & \les\norm f_{\ell^{2}(\Z^{2})}^{4}\les\norm{\la_{j}}_{\ell_{j\le m}^{2}}^{4},
\end{align*}
which is just \eqref{eq:sum Q_ab f(Q) < la_2 ^4} for the case.

\begin{figure}[!h]
\centering
\begin{tikzpicture}[rotate=5]
\centering
\coordinate [label=above left:$\xi_1$]  (1) at (0,2.5);
\coordinate (2) at (0,0);
\coordinate [label=below right:$\xi_3$]  (3) at (4,0);
\coordinate  (4) at (4,2.5);

\coordinate [label=above left:$S_{j_1}$]  (S1) at (-0.75,2.5);
\coordinate  (S2) at (-0.75,0);
\coordinate [label=below right:$S_{j_3}$]  (S3) at (4.75,0);
\coordinate (S4) at (4.75,2.5);
\coordinate [label=left:$\ell_{\xi_1}$] (1-) at (-2,2.5);
\coordinate (4+) at (6,2.5);
\draw (1) -- (2);
\draw (2) -- (3);
\draw (3) -- (4);
\draw (4) -- (1);
\draw (1-) -- (4+);
\foreach \x in {(1), (3)}
{
    \fill \x circle[radius=2pt];
}
\foreach \x in {(1),  (3)}
{
    \draw \x circle[radius=20pt];
}
\end{tikzpicture}
\caption{Determination of a rectangle from given $\xi_1,\xi_3\in\mathbb{Z}^2$}
\end{figure}

\emph{Case II:} $(\al,\be)=(2,2)$.

Let $j_{1},\ldots,j_{4},a_{1},\ldots,a_{4}$ be integers such that
$0\le j_{k}\le m$ and $1\le a_{k}<j_{k}/2+C$ for $k=1,\ldots,4$.
By \eqref{eq:-2a1+a2+a4}, \eqref{eq:-2a1+a2+a3}, and their cyclic
relabels of indices $1,2,3,4$, for non-negative tuple $\left(c_{k,l}\right)_{k\le4,l\le2}$
such that $\sum_{k=1}^4\sum_{l=1}^2 c_{k,l}=1$, we have
\[
\#\mc Q^{0}(\overrightarrow{j},\overrightarrow{a})  \les2^{\sum_{k=1}^{4}\sum_{l=1}^{2}c_{k,l}\left(2j_{k}-2a_{k}+a_{k+1}+a_{k+1+l}\right)}.
\]
The choices $(c_{k,l})_{k\le4,l\le2}=\frac{1}{24}\cdot((2,3),(3,4),(0,6),(3,3))$
and $\frac{1}{12}\cdot((1,2),(1,2),(3,0),(1,2))$ give
\begin{equation}
\#\mc Q^{0}(\overrightarrow{j},\overrightarrow{a})  \les2^{\frac{1}{2}(j_{1}+j_{2}+j_{3}+j_{4})-\frac{1}{12}(j_{1}-j_{2})},\label{eq:j1-j2}
\end{equation}
\begin{equation}
\#\mc Q^{0}(\overrightarrow{j},\overrightarrow{a})  \les2^{\frac{1}{2}(j_{1}+j_{2}+j_{3}+j_{4})+\frac{1}{6}(a_{1}-a_{2})},\label{eq:a1-a2}
\end{equation}
respectively. Interpolating \eqref{eq:j1-j2}, \eqref{eq:a1-a2}, and their dihedral relabelings of indices $1,2,3,4$,
for $\delta=\frac{1}{10000}$, we have
\[
\#\mc Q^{0}(\overrightarrow{j},\overrightarrow{a})  \les2^{\frac{1}{2}(j_{1}+j_{2}+j_{3}+j_{4})-\delta\sum_{k=1}^{4}\left(\left|j_{k}-j_{k+1}\right|+\left|a_{k}-a_{k+1}\right|\right)},
\]
from which we conclude by 
\[
\mc{Q}^0_{2,2}\subset\bigcup_{\substack{0\le j_k \le m \\ 1\le a_k\le j_k/2+C \\ k=1,2,3,4}}\mc Q^{0}(\overrightarrow{j},\overrightarrow{a}) 
\]
that (using that $f(\xi)=2^{-j/2}\la_j$ for $\xi\in S_j$)
\begin{align*}
\sum_{Q\in\mc Q_{2,2}^{0}}f(Q) & \les\sum_{\substack{0\le j_{k}\le m\\
1\le a_{k}<j_{k}/2+C\\
k=1,2,3,4
}
}\#\mc Q^{0}(\overrightarrow{j},\overrightarrow{a})\cdot  2^{-(j_{1}+j_{2}+j_{3}+j_{4})/2}\la_{j_{1}}\la_{j_{2}}\la_{j_{3}}\la_{j_{4}}\\
 & \les\sum_{\substack{0\le j_{k}\le m\\
1\le a_{k}<j_{k}/2+C\\
k=1,2,3,4
}
}2^{-\delta\sum_{k=1}^{4}\left(\left|j_{k}-j_{k+1}\right|+\left|a_{k}-a_{k+1}\right|\right)}\la_{j_{1}}\la_{j_{2}}\la_{j_{3}}\la_{j_{4}}\\
 & \les\sum_{\substack{0\le j_{k}\le m\\
k=1,2,3,4
}
}\left(2^{-\delta\left|j_{1}-j_{2}\right|/2}\la_{j_{1}}\la_{j_{2}}\cdot2^{-\delta\left|j_{3}-j_{4}\right|}\la_{j_{3}}\la_{j_{4}}\right)\\
 & \cdot\sum_{\substack{\substack{1\le a_{k}\le m/2}
+C\\
k=1,2,3,4
}
}2^{-\delta\left(\left|a_{1}-a_{2}\right|+\left|a_{2}-a_{3}\right|+\left|a_{3}-a_{4}\right|\right)}\\
 & \les\norm{\la_{j}}_{\ell_{j\le m}^{2}}^{4}\cdot\sum_{1\le a_{4}\le m/2+C}1\les m\norm{\la_{j}}_{\ell_{j\le m}^{2}}^{4},
\end{align*}
which is just \eqref{eq:sum Q_22 f(Q) < m*la_2^4}.

We pass to showing \eqref{eq:sum Q_22 1/gcd f(Q) < la_2^4}, which
is just a repeat of the preceding proof. By \eqref{eq:-2a1+a2+a4},
\eqref{eq:-2a1+a2+a3}, \eqref{eq:-2a1+a2+a4/2}, and their cyclic
relabels, for non-negative tuple $\left(c_{k,l}\right)_{k\le4,l\le2}$
such that $\sum_{k,l}c_{k,l}=1$, we have
\[
\sum_{(\xi_{1},\xi_{2},\xi_{3},\xi_{4})\in\mc Q^{0}(\overrightarrow{j},\overrightarrow{a})  }\frac{1}{\gcd(\xi_{1}-\xi_{4})}\les2^{\sum_{k=1}^{4}\sum_{l=1}^{2}c_{k,l}\left(2j_{k}-2a_{k}+a_{k+1}+a_{k+1+l}\right)}\cdot2^{-c_{1,2}\cdot a_{4}/2}.
\]
Plugging the same choices of $\left(c_{k,l}\right)_{k\le4,l\le2}$,
we obtain
\[
\sum_{(\xi_{1},\xi_{2},\xi_{3},\xi_{4})\in\mc Q^{0}(\overrightarrow{j},\overrightarrow{a})  }\frac{1}{\gcd(\xi_{1}-\xi_{4})}\les2^{\frac{1}{2}(j_{1}+j_{2}+j_{3}+j_{4})-\delta\sum_{k=1}^{4}\left(\left|j_{k}-j_{k+1}\right|+\left|\al_{k}-\al_{k+1}\right|\right)}\cdot2^{-\delta a_{4}},
\]
concluding that
\begin{align*}
&\sum_{Q=(\xi_1,\xi_2,\xi_3,\xi_4)\in\mc Q_{2,2}^{0}}\frac{1}{\gcd(\xi_1-\xi_4)}f(Q)\\
& \les\sum_{\substack{0\le j_{k}\le m\\
1\le a_{k}<j_{k}/2+C\\
k=1,2,3,4
}
}\sum_{(\xi_{1},\xi_{2},\xi_{3},\xi_{4})\in\mc Q^{0}(\overrightarrow{j},\overrightarrow{a})  }\frac{1}{\gcd(\xi_{1}-\xi_{4})}2^{-(j_{1}+j_{2}+j_{3}+j_{4})/2}\la_{j_{1}}\la_{j_{2}}\la_{j_{3}}\la_{j_{4}}\\
 & \les\sum_{\substack{0\le j_{k}\le m\\
1\le a_{k}<j_{k}/2+C\\
k=1,2,3,4
}
}2^{-\delta\sum_{k=1}^{4}\left(\left|j_{k}-j_{k+1}\right|+\left|a_{k}-a_{k+1}\right|\right)}\la_{j_{1}}\la_{j_{2}}\la_{j_{3}}\la_{j_{4}}\cdot2^{-\delta a_{4}}\\
 & \les\sum_{\substack{0\le j_{k}\le m\\
k=1,2,3,4
}
}\left(2^{-\delta\left|j_{1}-j_{2}\right|/2}\la_{j_{1}}\la_{j_{2}}\cdot2^{-\delta\left|j_{3}-j_{4}\right|}\la_{j_{3}}\la_{j_{4}}\right)\\
 & \cdot\sum_{\substack{\substack{1\le a_{k}\le m/2}
+C\\
k=1,2,3,4
}
}2^{-\delta\left(\left|a_{1}-a_{2}\right|+\left|a_{2}-a_{3}\right|+\left|a_{3}-a_{4}\right|\right)}\cdot2^{-\delta a_{4}}\\
 & \les\norm{\la_{j}}_{\ell_{j\le m}^{2}}^{4},
\end{align*}

which is just \eqref{eq:sum Q_22 1/gcd f(Q) < la_2^4}.

\emph{Case III:} $(\al,\be)=(3,3)$.

For $j_{1},j_{2},j_{3},j_{4}\in\N$, by Lemma \ref{lem:j1+j2+a3}, we have
\[
q_{j_{1},j_{2},j_{3},j_{4}}:=\#\mc Q_{3,3}^{0}\cap\left(S_{j_{1}}\times S_{j_{2}}\times S_{j_{3}}\times S_{j_{4}}\right)\les\min_{k=1,2,3,4}2^{j_{k}+j_{k+1}}.
\]
One can check
\[
\min_{k=1,2,3,4}\{j_{k}+j_{k+1}\}-\frac{1}{2}\left(j_{1}+j_{2}+j_{3}+j_{4}\right)\le-\frac{1}{100}\left(\left|j_{1}-j_{3}\right|+\left|j_{2}-j_{4}\right|\right),
\]
and so
\begin{align*}
\sum_{Q\in\mc Q_{3,3}^{0}}f(Q) & \les\sum_{j_{1},j_{2},j_{3},j_{4}\ge0}q_{j_{1},j_{2},j_{3},j_{4}}2^{-\frac{1}{2}(j_{1}+j_{2}+j_{3}+j_{4})}\la_{j_{1}}\la_{j_{2}}\la_{j_{3}}\la_{j_{4}}\\
 & \les\sum_{j_{1},j_{3}\ge0}2^{-\frac{1}{100}\left|j_{1}-j_{3}\right|}\la_{j_{1}}\la_{j_{3}}\cdot\sum_{j_{2},j_{4}\ge0}2^{-\frac{1}{100}\left|j_{2}-j_{4}\right|}\la_{j_{2}}\la_{j_{4}}\\
 & \les\norm{\la_{j}}_{\ell_{j\le m}^{2}}^{4},
\end{align*}
which is just \eqref{eq:sum Q_ab f(Q) < la_2 ^4} for the case.

\emph{Case IV:} $(\al,\be)=(2,3)$ (or $(3,2)$).

For $j_{1},j_{2},j_{3},j_{4}\in\N$, by Lemma \ref{lem:j1+j2+a3} we have
\begin{equation}
q_{j_{1},j_{2},j_{3},j_{4}}:=\#\mc Q_{2,3}^{0}\cap\left(S_{j_{1}}\times S_{j_{2}}\times S_{j_{3}}\times S_{j_{4}}\right)\les2^{\min\left\{ j_{1}+j_{4},j_{2}+j_{3},j_{1}+j_{2}\right\} }.\label{eq:(3,2) 1}
\end{equation}

For $\overrightarrow{a}=(a_1,a_2,0,0)$ with integers $a_{1},a_{2}$ such that $1\le a_{1}<j_{1}/2+C$ and
$1\le a_{2}<j_{2}/2+C$, by Lemma \ref{lem:j1+j2+a3} and \eqref{eq:-2a1+a2+a3}, we also have
\begin{equation}\#\mc Q^{0}(\overrightarrow{j},\overrightarrow{a}) \les 2^{j_3+j_4+\min\{ a_1,a_2 \}} \les 2^{j_{3}+j_{4}+\frac{1}{2}(a_{1}+a_{2})}\label{eq:(3,2) 2-1}\end{equation}
and
\begin{align}
\#\mc Q^{0}(\overrightarrow{j},\overrightarrow{a})  & \les\min\left\{ 2^{2j_{1}-2a_{1}},2^{2j_{2}-2a_{2}}\right\} \label{eq:(3,2) 2-2}\\
 & \les2^{j_{1}+j_{2}-(a_{1}+a_{2})}.\nonumber 
\end{align}

Interpolating \eqref{eq:(3,2) 2-1} and \eqref{eq:(3,2) 2-2}, we
have
\begin{align*}
\#\mc Q^{0}(\overrightarrow{j},\overrightarrow{a})  & \les2^{\frac{3}{5}\left(j_{3}+j_{4}+\frac{1}{2}(a_{1}+a_{2})\right)+\frac{2}{5}\left(j_{1}+j_{2}-(a_{1}+a_{2})\right)}\\
&=2^{\frac{3}{5}\left(j_{3}+j_{4}\right)+\frac{2}{5}\left(j_{1}+j_{2}\right)-\frac{1}{10}\left(a_{1}+a_{2}\right)},
\end{align*}
which implies
\begin{align}
q_{j_{1},j_{2},j_{3},j_{4}} & \le\sum_{\substack{1\le a_{1}<j_{1}/2+C\\
1\le a_{2}<j_{2}/2+C
}
}\#\mc Q^{0}(\overrightarrow{j},\overrightarrow{a}) \label{eq:(3,2) 2}\\
 & \les\sum_{a_{1},a_{2}\in\N}2^{\frac{3}{5}\left(j_{3}+j_{4}\right)+\frac{2}{5}\left(j_{1}+j_{2}\right)-\frac{1}{10}\left(a_{1}+a_{2}\right)}\nonumber \\
 & \les2^{\frac{3}{5}\left(j_{3}+j_{4}\right)+\frac{2}{5}\left(j_{1}+j_{2}\right)}.\nonumber 
\end{align}
By \eqref{eq:(3,2) 1}, \eqref{eq:(3,2) 2}, and the inequality
\begin{align*}
 & \min\left\{ j_{1}+j_{4},j_{2}+j_{3},j_{1}+j_{2},\frac{3}{5}\left(j_{3}+j_{4}\right)+\frac{2}{5}\left(j_{1}+j_{2}\right)\right\} -(j_{1}+j_{2}+j_{3}+j_{4})/2\\
 & \le-\frac{1}{100}\left(\left|j_{1}-j_{3}\right|+\left|j_{2}-j_{4}\right|\right),
\end{align*}
we conclude
\begin{align*}
\sum_{Q\in\mc Q_{2,3}^{0}}f(Q) & \les\sum_{j_{1},j_{2},j_{3},j_{4}\ge0}q_{j_{1},j_{2},j_{3},j_{4}}2^{-(j_{1}+j_{2}+j_{3}+j_{4})/2}\la_{j_{1}}\la_{j_{2}}\la_{j_{3}}\la_{j_{4}}\\
 & \les\sum_{j_{1},j_{2},j_{3},j_{4}\ge0}2^{-\frac{1}{100}\left(\left|j_{1}-j_{3}\right|+\left|j_{2}-j_{4}\right|\right)}\la_{j_{1}}\la_{j_{2}}\la_{j_{3}}\la_{j_{4}}\\
 & \les\sum_{j_{1},j_{3}\ge0}2^{-\frac{1}{100}\left|j_{1}-j_{3}\right|}\la_{j_{1}}\la_{j_{3}}\cdot\sum_{j_{2},j_{4}\ge0}2^{-\frac{1}{100}\left|j_{2}-j_{4}\right|}\la_{j_{2}}\la_{j_{4}}\\
 & \les\norm{\la_{j}}_{\ell_{j\le m}^{2}}^{4},
\end{align*}
which is just \eqref{eq:sum Q_ab f(Q) < la_2 ^4} for the case.
\end{proof}
\begin{rem} We thank Po-Lam Yung for the following more conceptional explanation of above interpolation type arguments: For example in Case IV,  $(\frac12,\frac12,\frac12,\frac12)$ is in the interior of the convex hull $C$ of $(1,0,0,1)$, $(0,1,1,0)$, $(1,1,0,0)$ and $(\frac25,\frac25,\frac35,\frac35)$. More precisely
$$
(\tfrac12,\tfrac12,\tfrac12,\tfrac12) = \tfrac15(1,0,0,1) + \tfrac15(0,1,1,0) + \tfrac{1}{10}(1,1,0,0) + \tfrac12(\tfrac25,\tfrac25,\tfrac35,\tfrac35).$$

All these points lie in the plane $P=\{x_1+x_3 = 1 \text{ and } x_2+x_4 = 1\}$. Hence, for small $\delta > 0$, the four points
$$(\tfrac12\pm_1 \delta, \tfrac12\pm_2\delta, \tfrac12\mp_1\delta, \tfrac12\mp_2\delta), \quad \pm_1,\pm_2\in \{-,+\}$$
are all in $P\cap C$.
Therefore, regardless of the signs of $j_1-j_3$ and $j_2-j_4$, there exist $c_j\geq 0$ satisfying $c_1+c_2+c_3+c_4 = 1$ so that
\begin{align*}
    &c_1(j_1+j_4) + c_2(j_2+j_3) + c_3(j_1+j_2) + c_4( \tfrac35(j_3+j_4)+\tfrac25(j_1+j_2) ) \\
    ={}& \tfrac12(j_1+j_2+j_3+j_4) - \delta (|j_1-j_3| + |j_2-j_4|),
\end{align*}
and in the argument in Case IV above we have chosen $\delta = \tfrac1{100}$. 
\end{rem}
This completes the overall proof of Theorem \ref{thm:local Stri}.

\section{Proof of Theorem \ref{thm:GWP}}\label{sec:proof-gwp}
We only carry out the proof on the relevant case $0<s\le 1$, which is most convenient with adapted function spaces. For this purpose, we recall the definition of the function space $Y^{s}$ from \cite{herr2011global} and relevant facts.
For a general theory, we refer to \cite{koch2014dispersive,herr2011global,hadac2009well,hadac2010erratum}.
\begin{defn}

Let $\mathcal{Z}$ be the collection of finite non-decreasing sequences
$\left\{ t_{k}\right\} _{k=0}^{K}$ in $\R$. We define $V^{2}$ as
the space of all right-continuous functions $u:\R\rightarrow\C$ with
$\lim_{t\rightarrow-\infty}u(t)=0$ and
\[
\|u\|_{V^{2}}:=\left(\sup_{\left\{ t_{k}\right\} _{k=0}^{K}\in\mc Z}\sum_{k=1}^{K}\left|u(t_{k})-u(t_{k-1})\right|^{2}\right)^{1/2}<\infty.
\]
For $s\in\R$, we define $Y^{s}$ as the space of $u:\R\times\T^{2}\rightarrow\C$
such that $e^{it|\xi|^2}\widehat{u(t)}(\xi)$ lies in $V^{2}$ for each $\xi\in\Z^{2}$
and
\[
\|u\|_{Y^{s}}\:=\left(\sum_{\xi\in\Z^{2}}\left(1+\left|\xi\right|^{2}\right)^{s}\|e^{it|\xi|^{2}}\widehat{u(t)}(\xi)\|_{V^{2}}^{2}\right)^{1/2}<\infty.
\]
\end{defn}
For time interval $I\subset\R$, we also consider the restriction space $Y^s(I)$ of $Y^s$.

The space $Y^{s}$ is used in \cite{herr2011global} and later works on critical regularity theory of Schr{\"o}dinger equations on periodic domains. Some well-known properties are the following.
\begin{prop}[{\cite[Section 2]{herr2011global}}]
$Y^{s}$-norms have the following properties.
\begin{itemize}
\item Let $A,B$ be disjoint subsets of $\Z^{2}$. For $s\in\R$, we have
\begin{equation}
\|P_{A\cup B}u\|_{Y^{s}}^{2}=\|P_{A}u\|_{Y^{s}}^{2}+\|P_{B}u\|_{Y^{s}}^{2}.\label{eq:l^2_=00005Cxistructure}
\end{equation}
\item For $s\in\R$, time $T>0$, and a function $f\in L^{1}H^{s}$, denoting
\[
\mc I(f)(t):=\int_{0}^{t}e^{i(t-t')\De}f(t')dt'
\]
we have 
\begin{equation}
\norm{\chi_{[0,T)}\cdot\mc I(f)}_{Y^{s}}\les\sup_{v\in Y^{-s}:\norm v_{Y^{-s}}\le1}\left|\int_0^T\int_{\T^{2}}f\overline{v}dxdt\right|.\label{eq:U2V2}
\end{equation}
\item For time $T>0$ and a function $\phi\in H^s(\T^2)$, we have
\begin{equation}\label{eq:free}
\norm{\chi_{[0,T)}\cdot e^{it\De}\phi}_{Y^s}\approx\norm{\phi}_{H^s}
\end{equation}
and for function $u\in Y^s$, $u\in L^\infty H^s$ and
\begin{equation}\label{eq:Y^s>L infty}
\norm{\chi_{[0,T)}u}_{Y^s}\gtrsim\norm{u}_{L^\infty([0,T);H^s)}.
\end{equation}
\end{itemize}
\end{prop}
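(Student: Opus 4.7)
All three properties are standard consequences of the $U^p$--$V^p$ framework developed by Koch--Tataru and adapted to Schr\"odinger equations on tori in the cited work \cite{herr2011global}. My plan is therefore to verify each property by unpacking the definition of $Y^s$ and appealing to well-known facts about $V^2$ and its duality with $U^2$.

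Property \eqref{eq:l^2_=00005Cxistructure} is immediate from the definition: since
\[
\norm{u}_{Y^s}^2 = \sum_{\xi \in \Z^2}(1+|\xi|^2)^s \, \norm{e^{it|\xi|^2}\wt{u(t)}(\xi)}_{V^2}^2
\]
is an $\ell^2$-sum indexed by $\xi$, and the Fourier multiplier $P_{A\cup B}$ simply restricts the summation to $\xi \in A\cup B$, disjointness of $A$ and $B$ lets us split the sum as $\sum_{\xi \in A} + \sum_{\xi \in B}$. For the free-evolution identity \eqref{eq:free}, note that $e^{it|\xi|^2}\wt{e^{it\De}\phi}(\xi) = \wt\phi(\xi)$ is constant in $t$, so $\chi_{[0,T)}\cdot e^{it\De}\phi$ yields the step function $\chi_{[0,T)}(t)\wt\phi(\xi)$ in each frequency. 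A step function with a single $c\mapsto 0$ jump has $V^2$-norm comparable to $|c|$, giving $\norm{e^{it|\xi|^2}\wt{(\chi_{[0,T)}e^{it\De}\phi)(t)}(\xi)}_{V^2}\approx |\wt\phi(\xi)|$, and summing with weights $(1+|\xi|^2)^s$ produces $\norm{\phi}_{H^s}$. For \eqref{eq:Y^s>L infty}, one uses the standard embedding $V^2 \hook L^\infty$: any $v \in V^2$ with $\lim_{t\to-\infty}v(t)=0$ satisfies $|v(t)| \le \norm{v}_{V^2}$ pointwise, so $|\wt{u(t)}(\xi)|\le \norm{e^{it|\xi|^2}\wt{u(t)}(\xi)}_{V^2}$ for each fixed $t \in [0,T)$, and squaring and summing against $(1+|\xi|^2)^s$ yields $\norm{u(t)}_{H^s}\les \norm{\chi_{[0,T)}u}_{Y^s}$.

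The Duhamel duality bound \eqref{eq:U2V2} is the technical core and the step I expect to be the main obstacle. The idea is to work in the atomic $U^2$-predual of $V^2$ (both considered along the Schr\"odinger flow). Writing $w = \chi_{[0,T)}\cdot\mc I(f)$, one tests $\norm{w}_{Y^s}$ against a competitor in the $Y^{-s}$-dual picture: for each frequency $\xi$, the $V^2$-norm of $e^{it|\xi|^2}\wt{w(t)}(\xi)$ is bounded by pairing with a $U^2$-atom, and summing in $\xi$ with weights $(1+|\xi|^2)^s$ reassembles a test function $v \in Y^{-s}$ by Cauchy--Schwarz. Rewriting the Duhamel integral via Fubini,
\[
\int_0^T \int_{\T^2}\mc I(f)(t,x)\,\partial_t\ol{v(t,x)}\,dx\,dt = -\int_0^T\int_{\T^2}f(t,x)\ol{v(t,x)}\,dx\,dt
\]
plus boundary terms at $t=T$, and using that $e^{it|\xi|^2}\wt{\mc I(f)(t)}(\xi) = \int_0^t e^{is|\xi|^2}\wt{f(s)}(\xi)\,ds$ is naturally paired against the $V^2$ test function $e^{it|\xi|^2}\wt{v(t)}(\xi)$, produces the right-hand side of \eqref{eq:U2V2}. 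The main subtlety is the handling of the sharp cutoff $\chi_{[0,T)}$, which is not smooth but nevertheless preserved under the $U^2$-$V^2$ machinery since $V^2$ is stable under multiplication by indicators of intervals (with a controlled loss of a factor of $2$ from the extra jump at $T$); this is precisely why $V^2$ rather than $U^2$ appears in the definition of $Y^s$. With these ingredients the three stated properties all follow, and for full rigor I would cite \cite[Section 2]{herr2011global} together with the foundational results in \cite{hadac2009well,hadac2010erratum,koch2014dispersive}.
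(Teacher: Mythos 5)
Your sketch follows essentially the same route as the paper, which simply cites \cite[Section 2]{herr2011global}: \eqref{eq:l^2_=00005Cxistructure} and \eqref{eq:free} from the $\ell^2_\xi V^2_t$ structure and the step-function computation, \eqref{eq:Y^s>L infty} from the embedding $V^2\hook L^\infty$ (valid because $\lim_{t\to-\infty}=0$), and \eqref{eq:U2V2} from the $U^2$--$V^2$ duality for the twisted, absolutely continuous primitive $t\mapsto\int_0^t e^{it'|\xi|^2}\wt{f(t')}(\xi)\,dt'$ together with a Cauchy--Schwarz recombination over $\xi$ to build the $Y^{-s}$ test function. The only caveat is that your displayed integration-by-parts identity $\int_0^T\mc I(f)\,\partial_t\ol v = -\int_0^T f\ol v + \text{bdry}$ is literally incorrect as written (it omits the $\Delta$-term and $\partial_t v$ does not exist for $v\in V^2$); it must be carried out in the flow-twisted variables via the bilinear form $B$ of \cite{hadac2009well}, as you indicate one line later, and this is precisely where the cited \cite[Prop.\ 2.11]{herr2011global} does the work.
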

For $N\in2^{\N}$, denote by $\mc C_{N}$ the set of cubes of size
$N$
\[
\mc C_{N}:=\left\{ (0,N]^{2}+N\xi_{0}:\xi_{0}\in\Z^{2}\right\} .
\]
We transfer \eqref{eq:localStri} to the following estimate.
\begin{lem}\label{lem:L4}
For all $N\in2^{\N}$, intervals $I\subset\R$ such
that $\left|I\right|\le\frac{1}{\log N}$, cubes $C\in\mc C_{N}$,
and $u\in Y^{0}$, we have
\begin{equation}
\norm{P_{C}u}_{L^{4}_{t,x}(I\times\T^{2})}\les\norm u_{Y^{0}}.\label{eq:local Y^0 Stri}
\end{equation}
\end{lem}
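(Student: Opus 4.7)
\emph{Plan.} I would deduce Lemma \ref{lem:L4} from Theorem \ref{thm:local Stri} via the standard $U^p_\De$-$V^p_\De$ transference machinery of \cite{hadac2009well,herr2011global}, the key point being that the time-scale $1/\log\#S$ in Theorem \ref{thm:local Stri} is exactly tuned to the Fourier scale of the cube $C$.

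Since $\#C=N^2$ for $C\in\mc C_N$, we have $1/\log\#C\approx 1/\log N$. Theorem \ref{thm:local Stri} together with the time translation invariance of the Schr\"odinger group therefore yields
\[
\norm{e^{it\De}P_C\phi}_{L^4_{t,x}(J\times\T^2)}\les\norm{\phi}_{L^2}
\]
uniformly over intervals $J\subset\R$ with $|J|\le 1/\log N$. To transfer to $U^4_\De$, I would take a $U^4_\De$-atom $u(t)=\sum_k\chi_{[t_{k-1},t_k)}(t)e^{it\De}\phi_k$ with $\sum_k\norm{\phi_k}_{L^2}^4\le 1$, observe that each piece $[t_{k-1},t_k)\cap I$ has length at most $|I|\le 1/\log N$, and apply the preceding estimate on each piece:
\[
\norm{P_C u}_{L^4_{t,x}(I\times\T^2)}^4=\sum_k\norm{e^{it\De}P_C\phi_k}_{L^4_{t,x}(([t_{k-1},t_k)\cap I)\times\T^2)}^4\les\sum_k\norm{\phi_k}_{L^2}^4\le 1.
\]
Atomic decomposition of $U^4_\De$ then promotes this to $\norm{P_C u}_{L^4_{t,x}(I\times\T^2)}\les\norm{u}_{U^4_\De}$ for every $u\in U^4_\De$, and the standard embedding $V^2_\De\hook U^4_\De$ yields $\norm{P_C u}_{L^4_{t,x}(I\times\T^2)}\les\norm{P_C u}_{V^2_\De}$.

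Finally, I would close the chain by comparing $V^2_\De$ and $Y^0$ at the frequency-localized level. Writing $w_\xi(t):=e^{it|\xi|^2}\widehat{u(t)}(\xi)$ and using Parseval on $\T^2$, for any partition $\{t_k\}$
\[
\sum_k\norm{e^{-it_k\De}P_C u(t_k)-e^{-it_{k-1}\De}P_C u(t_{k-1})}_{L^2_x}^2=\sum_k\sum_{\xi\in C}|w_\xi(t_k)-w_\xi(t_{k-1})|^2.
\]
The supremum over a single partition shared by all $\xi$ is bounded by the sum of the individual suprema, so $\norm{P_C u}_{V^2_\De}^2\le\sum_{\xi\in C}\norm{w_\xi}_{V^2}^2=\norm{P_C u}_{Y^0}^2\le\norm{u}_{Y^0}^2$, which closes the argument.

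The only real content is Theorem \ref{thm:local Stri}; the remaining steps are routine $U^p$-$V^p$ manipulations and Parseval. The one place where care is needed is the scale-matching in the atomic transference: the sub-intervals generated by the atom must automatically satisfy $|[t_{k-1},t_k)\cap I|\le 1/\log N$, which is guaranteed by the hypothesis $|I|\le 1/\log N$. This is precisely where the sharpness of Theorem \ref{thm:local Stri} is essential: any weaker time scale would produce $N$-dependent constants upon summing over atoms, and the resulting inequality would fail to support the global well-posedness argument of Theorem \ref{thm:GWP}.
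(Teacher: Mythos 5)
Your proposal is correct and follows essentially the same route as the paper: test the estimate on $U^4_{\De}$-atoms (each piece $[t_{k-1},t_k)\cap I$ has length $\le |I|\le 1/\log N$, so Theorem \ref{thm:local Stri} applies after a harmless time translation and the observation $\log\#C\approx\log N$), invoke the atomic characterization of $U^4_{\De}$, and then use the chain of embeddings $Y^0 \hook V^2_{\De}L^2 \hook U^4_{\De}L^2$. The paper abbreviates the last step by citing \cite[Prop.\ 2.3]{herr2011global}, whereas you unwind the $V^2_{\De}$-versus-$Y^0$ comparison via Parseval and the observation that a single shared partition is dominated by $\xi$-wise suprema; that is a correct spelling-out of what the citation supplies, not a different argument.
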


\begin{proof}
We follow the notations in \cite[Section 2]{herr2011global}. Let
$u$ be a $U_{\De}^{4}L^2$-atom, i.e.
\[
u(t)=\sum_{j=1}^{J}1_{[t_{j-1},t_{j})}e^{it\De}\phi_{j}
\]
for $\phi_{1},\ldots,\phi_{J}\in L^{2}(\T^{2})$, $t_0\le...\le t_J$, $\sum_{j=1}^{J}\norm{\phi_{j}}_{L^{2}}^{4}=1$.
By \eqref{eq:localStri}, we have
\begin{equation}
\norm{P_{C}u}_{L^{4}_{t,x}(I\times\T^{2})}^{4}\les\sum_{j=1}^{J}\norm{P_{C}e^{it\De}\phi_{j}}_{L^{4}_{t,x}(I\times\T^{2})}^{4}\les\sum_{j=1}^{J}\norm{\phi_{j}}_{L^{2}}^{4}\les 1.\label{eq:local Y^0 Stri J}
\end{equation}
By \cite[Proposition 2.3]{herr2011global} and \eqref{eq:local Y^0 Stri J},
for $u\in Y^{0}$ we conclude
\[
\norm{P_{C}u}_{L_{t,x}^{4}(I\times\T^{2})}\les\norm u_{U_{\De}^{4}L^2}\les\norm u_{V_{\De}^{2}L^2}\les\norm u_{Y^{0}}.\qedhere
\]
\end{proof}
Since we only rely on the $L^4$ estimate, Lemma \ref{lem:L4} explains why we can work with the $Y^s$-norm, instead of the $U^2$-based space as was used in \cite{herr2011global}. 

For $N\in2^\N$, we set the interval $I_N:=[0,1/\log N)$. Let $Z_N$ be the norm
\[
\norm{u}_{Z_N}:=\norm{\chi_{I_N}\cdot u}_{Y^0}+N^{-s}\norm{\chi_{I_N}\cdot u}_{Y^s}.
\]
We show our main trilinear estimate:

\begin{lem}\label{lem:tri-est}
For $0<s\le 1$ and $N\gg_s 1$, we have
\begin{equation}\label{eq:trilinear}
\norm{\mc I(u_1u_2u_3)}_{Z_N}\les\norm{u_1}_{Z_N}\norm{u_2}_{Z_N}\norm{u_3}_{Z_N},
\end{equation}
where each $u_j$ could also be replaced by its complex conjugate. The implicit constant is independent from $s$.
\end{lem}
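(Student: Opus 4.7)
The plan is to estimate $\norm{\mc I(F)}_{Z_N}$ with $F=u_1 u_2 u_3$ by dualizing via \eqref{eq:U2V2}, which gives
\[\norm{\chi_{I_N}\mc I(F)}_{Y^s}\les \sup_{\norm{v}_{Y^{-s}}\le 1}\left|\int_{I_N}\int_{\T^2} F\ol v\,dx\,dt\right|\]
for both $s=0$ (controlling the $Y^0$-part of the $Z_N$-norm) and the given $s>0$ (controlling the $N^{-s}Y^s$-part). The problem thus reduces to bounding the space--time quadrilinear form $\La(u_1,u_2,u_3,v):=\int_{I_N}\int_{\T^2} u_1u_2u_3\ol v\,dx\,dt$ against appropriate test functions $v$.

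I would then dyadically decompose each factor, $u_j=\sum_{N_j}P_{N_j}u_j$ and $v=\sum_{N_4}P_{N_4}v$. The Fourier support condition on the product forces the two largest scales among $\{N_1,N_2,N_3,N_4\}$ to be comparable; WLOG $N_1\ge N_2\ge N_3$, leading to the two configurations (A) $N_1\sim N_2$ and (B) $N_1\sim N_4\gg N_2$. Correspondingly, I would split $\La$ by Cauchy--Schwarz in $L^2_{t,x}$ pairing the factors as $(P_{N_1}u_1,P_{N_2}u_2)$ versus $(P_{N_3}u_3,P_{N_4}v)$ in case (A), and as $(P_{N_1}u_1,P_{N_4}v)$ versus $(P_{N_2}u_2,P_{N_3}u_3)$ in case (B). The needed bilinear Strichartz estimate
\[\norm{P_{N_j}u_j\cdot P_{N_k}u_k}_{L^2(I\times\T^2)}\les \norm{P_{N_j}u_j}_{Y^0}\norm{P_{N_k}u_k}_{Y^0}, \quad |I|\le 1/\log\min(N_j,N_k),\]
follows from Lemma~\ref{lem:L4} by decomposing the higher-frequency factor into subcubes of size $\min(N_j,N_k)$, using almost-orthogonality of the resulting Fourier supports of the products and the $\ell^2$-structure \eqref{eq:l^2_=00005Cxistructure}.

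In each pairing, the Strichartz admissibility scale is the smaller of the two paired frequencies. When this scale is $\le N$, the bilinear estimate applies directly on $I_N$ without loss. When it exceeds $N$, I would partition $I_N$ into $\approx \log N_{\min}/\log N$ sub-intervals of length $1/\log N_{\min}$, incurring a logarithmic factor that is absorbed by the polynomial high-frequency penalty present in the bound $\norm{\chi_{I_N}P_{N_j}u_j}_{Y^0}\les \min\{1,(N/N_j)^s\}\norm{u_j}_{Z_N}$. For the $Y^s$-side estimate the test function $v$ may carry high frequency $N_4$, but this is compensated by $\norm{P_{N_4}v}_{Y^0}\approx N_4^s\norm{P_{N_4}v}_{Y^{-s}}$ and the prefactor $N^{-s}$ in $Z_N$. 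Summing over dyadic scales then reduces to a Schur-type estimate exploiting the constraint $N_1\sim\max(N_2,N_4)$ and the $\ell^2_j$-structure of $\norm{u_j}_{Z_N}$.

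The main obstacle is obtaining the constant \emph{independent} of $s\in(0,1]$: the high-frequency gain $(N_j/N)^s$ degenerates as $s\to 0$, so the logarithmic losses from sub-interval splitting must be absorbed with geometric series whose rates remain summable uniformly in small $s$. This is exactly where the sharp time scale $1/\log N$ in Theorem~\ref{thm:local Stri} is decisive: a weaker admissibility $1/(\log N)^\al$ with $\al>1$ would produce $(\log N_{\min}/\log N)^{\al/2}$ per bilinear factor, which no positive $s$ could absorb uniformly, breaking both this trilinear bound and the subsequent iteration for global well-posedness in Section~\ref{sec:proof-gwp}.
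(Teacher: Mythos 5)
Your overall architecture mirrors the paper's: dualize via \eqref{eq:U2V2}, run a paraproduct-style frequency decomposition, Cauchy--Schwarz into two bilinear $L^2$ factors, reduce each to the local $L^4$ bound of Lemma~\ref{lem:L4} by orthogonality in cubes, and absorb the $(\log M/\log N)$ loss from subdividing $I_N$ using the $N^{-s}$ weight built into $Z_N$. Your bilinear estimate on intervals of length $\le 1/\log\min(N_j,N_k)$ is correct (it corresponds to the paper's \eqref{eq:bilinear} after sub-interval partitioning), and the case analysis $N_1\sim N_2$ vs.\ $N_1\sim N_4$ is the right one.

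The genuine gap is in the very point you flag as ``the main obstacle'': obtaining a constant \emph{independent of} $s$. With plain dyadic decompositions, the Schur/Young convolution step produces factors like $\sum_{M\ge N,\ M\in 2^{\N}}(N/M)^{s}\approx 1/(1-2^{-s})\approx 1/s$, and similarly $\sum_{L\gtrsim K}(L/K)^{-s}\approx 1/s$, so the constant you would obtain blows up as $s\to 0$; the $N\gg_s 1$ hypothesis only neutralizes the extra $(\log M/\log N)$ factor, not this basic $1/s$. You assert the geometric series ``remain summable uniformly in small $s$'' but give no mechanism. The paper's device is concrete and essential: replace dyadic by $2^{k_s}$-adic cutoffs with $k_s\approx 1/s$, so that $\sum_{R\in 2^{k_s\N}}R^{-s}$ is geometric with ratio $\approx 1/2$ uniformly in $s$, and $\sum_{M}\frac{\log M}{\log N}\frac{N^s}{M^s}\les 1+\frac{1}{s\log N}\les 1$ once $\log N\gg 1/s$. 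Without this (or an equivalent trick) your constant is $O(1/s)$, which contradicts the statement. A secondary, smaller difference: the paper deliberately leaves $u_1,u_2$ undecomposed (only applying $P_{<N}$ or $P_{\sim M}$ to the product $u_1u_2$, and using $\|P_{\ge M/4}u_1\|_{Y^0}\les (N/M)^s\|u_1\|_{Z_N}$), and only dyadically decomposes $u_3$ and $v$; decomposing all four factors as you propose forces you to $\ell^1$-sum sequences that are only $\ell^2$-controlled, which creates additional (fixable but unaddressed) loss.
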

\begin{proof}
Let $k_s\approx 1/s$ be an integer. In this proof we use $2^{k_s}$-adic cutoffs: for $N\in 2^{k_s\N}$, we denote
\[
P_{\sim N}u=u_{\sim N}=u_{<2^{k_s}N}-u_{<N}.
\]
Since $\norm{\chi_{I_N}\cdot u}_{Z_{\tilde N}}\approx\norm{u}_{Z_N}$ holds for $\tilde N\in[2^{-k_s}N,N]$, we assume further that $N\in 2^{k_s\N}$.
\eqref{eq:trilinear} is reduced to showing
\begin{equation}\label{eq:bootstrap1}
\left|\int_{I_N\times\T^2}u_1u_2u_3\cdot v_{<N}dxdt\right|\les\norm{u_1}_{Z_N}\norm{u_2}_{Z_N}\norm{u_3}_{Z_N}\norm v_{Y^{0}}
\end{equation}
and
\begin{equation}\label{eq:bootstrap2}
\left|\int_{I_N\times\T^2}u_1u_2u_3\cdot v_{\ge N}dxdt\right|\les\norm{u_1}_{Z_N}\norm{u_2}_{Z_N}\norm{u_3}_{Z_N}\cdot N^{s}\norm v_{Y^{-s}}
\end{equation}
with implicit constants in \eqref{eq:bootstrap1} and \eqref{eq:bootstrap2} independent from $s$.

We prove \eqref{eq:bootstrap1} and \eqref{eq:bootstrap2}.
For $M\ge N$ in $2^{k_s\N}$ and $C\in\mc{C}_M$, partitioning $I_N$
to intervals of length comparable to $\frac{1}{\log M}$ and applying
\eqref{eq:local Y^0 Stri} to each, we have
\begin{equation}
\norm{\chi_{I_N}\cdot P_C u}_{L_{t,x}^{4}}\les\left(\frac{\log M}{\log N}\right)^{1/4}\norm u_{Y^{0}}.\label{eq:linear Y^0_M}
\end{equation}

By \eqref{eq:linear Y^0_M}, for $u\in Y^{s}$ we have
\begin{align}\label{eq:L^4 stri}
\norm{\chi_{I_N}\cdot u}_{L_{t,x}^{4}} & \les\norm{u_{<N}}_{Y^{0}}+\sum_{M\ge N}\left(\frac{\log M}{\log N}\right)^{1/4}\norm{u_{\sim M}}_{Y^{0}}\\
 & \les\norm u_{Y^{0}}+\sum_{M\ge N}\left(\frac{\log M}{\log N}\right)^{1/4}\frac{N^{s}}{M^{s}}\cdot N^{-s}\norm u_{Y^{s}}\nonumber\\
 & \les\norm u_{Y^{0}}+N^{-s}\norm u_{Y^{s}}\les\norm{u}_{Z_N},\nonumber
\end{align}
which implies \eqref{eq:bootstrap1}.

We prove \eqref{eq:bootstrap2} by partitioning the frequency domain
$\Z^{2}$ into congruent cubes. By \eqref{eq:linear Y^0_M} and \eqref{eq:l^2_=00005Cxistructure}, for $M\in2^{k_s\N}$ and $u,v\in Y^{0}$ we have
\begin{align}
  &\norm{\chi_{I_N} \cdot P_{\le M}\left(uv\right)}_{L_{t,x}^{2}}\label{eq:bilinear}\\
 & \les\sum_{\substack{C_{1},C_{2}\in\mc C_{M}\\
  \dist(C_{1},C_{2})\le M
}
}\norm{\chi_{I_N}\cdot P_{C_{1}}u\cdot P_{C_{2}}v}_{L_{t,x}^{2}}\nonumber \\
 & \les\sum_{\substack{C_{1},C_{2}\in\mc C_{M}\\
\dist(C_{1},C_{2})\le M
}
}\norm{\chi_{I_N}\cdot P_{C_{1}}u}_{L_{t,x}^{4}}\norm{\chi_{I_N}\cdot P_{C_{2}}v}_{L_{t,x}^{4}}\nonumber \\
 & \les\left(1+\frac{\log M}{\log N}\right)^{1/2}\left(\sum_{C\in\mc C_{M}}\norm{P_{C}u}_{Y^{0}}^2\sum_{C\in\mc C_{M}}\norm{P_{C}v}_{Y^{0}}^2\right)^{1/2}\nonumber \\
 & \les\left(1+\frac{\log M}{\log N}\right)^{1/2}\norm u_{Y^{0}}\norm v_{Y^{0}}.\nonumber 
\end{align}
We conclude quadrilinear estimates. By \eqref{eq:bilinear} and Young's convolution inequality on $(L,K)$ using that $\sum_{R\in2^{k_s\N}}R^{-s}\les 1$, we have
\begin{align}
 & \sum_{K\ge N}\sum_{L\gtrsim K}\left|\int_{I_N\times\T^{2}}P_{<N}(u_1u_2)P_{<N}(w_{\sim L}v_{\sim K})dxdt\right|\label{eq:quar1}\\
 & \les\norm {u_1}_{Y^{0}}\norm {u_2}_{Y^{0}}\sum_{K\ge N}\sum_{L\gtrsim K}\norm{w_{\sim L}}_{Y^{0}}\norm{v_{\sim K}}_{Y^{0}}\nonumber \\
 & \les\norm {u_1}_{Y^{0}}\norm {u_2}_{Y^{0}}\sum_{K\ge N}\sum_{L\gtrsim K}(L/K)^{-s}\norm{w_{\sim L}}_{Y^{s}}\norm{v_{\sim K}}_{Y^{-s}}\nonumber \\
 & \les\norm {u_1}_{Y^{0}}\norm {u_2}_{Y^{0}} \norm w_{Y^{s}} \norm v_{Y^{-s}}\nonumber 
\end{align}
and
\begin{align}
 & \sum_{M\ge N}\sum_{K\ge N}\sum_{L\gtrsim K}\left|\int_{I\times\T^{2}}P_{\sim M}\left(u_1u_2\right)P_{\sim M}\left(w_{\sim L}v_{\sim K}\right)dxdt\right|\label{eq:quar2}\\
 & \les\sum_{M\ge N}\frac{\log M}{\log N}(\norm{P_{\ge M/4}u_1}_{Y^0}\norm{u_2}_{Y^0}+\norm{u_1}_{Y^0}\norm{P_{\ge M/4}u_2}_{Y^0})\nonumber \\
 & \cdot\sum_{K\ge N}\sum_{L\gtrsim K}\norm{w_{\sim L}}_{Y^{0}}\norm{v_{\sim K}}_{Y^{0}}\nonumber \\
 & \les\sum_{M\ge N}\frac{\log M}{\log N}\frac{N^s}{M^s}\norm {u_1}_{Z_N} \norm {u_2}_{Z_N}\sum_{K\ge N}\sum_{L\gtrsim K}\norm{w_{\sim L}}_{Y^{0}}\norm{v_{\sim K}}_{Y^{0}}\nonumber \\
 & \les\norm {u_1}_{Z_N} \norm {u_2}_{Z_N}\sum_{K\ge N}\sum_{L\gtrsim K}(L/K)^{-s}\norm{w_{\sim L}}_{Y^s}\norm{v_{\sim K}}_{Y^{-s}}\nonumber \\
 & \les \norm {u_1}_{Z_N} \norm {u_2}_{Z_N} \norm w_{Y^s} \norm v_{Y^{-s}}.\nonumber 
\end{align}

Combining \eqref{eq:quar1} and \eqref{eq:quar2}, we have
\begin{equation}
\sum_{K\ge N}\sum_{L\gtrsim K}\left|\int_{I_N\times\T^{2}}(u_1u_2)w_{\sim L}v_{\sim K}dxdt\right|\les\norm{u_1}_{Z_N}\norm{u_2}_{Z_N}\norm{w}_{Z_N}N^{s}\norm v_{Y^{-s}}.\label{eq:quar 1+2}
\end{equation}
Note that in \eqref{eq:bilinear}, \eqref{eq:quar1}, \eqref{eq:quar2}, \eqref{eq:quar 1+2} each function on the left hand side could be replaced by its complex conjugate. We bound
\begin{align*} 
& \left|\int_{I_N\times\T^{2}}u_1u_2u_3v_{\ge N}dxdt\right|
\\
& \le\sum_{K\ge N}\left|\int_{I_N\times\T^{2}}P_{\ge K/4}u_1\cdot u_2\cdot u_3\cdot  v_{\sim K}dxdt\right|
\\
& +\sum_{K\ge N}\left|\int_{I_N\times\T^{2}}P_{<K/4}u_1\cdot P_{\ge K/4}u_2\cdot u_3\cdot v_{\sim K}dxdt\right|
\\
& +\sum_{K\ge N}\left|\int_{I_N\times\T^{2}}P_{<K/4}u_1\cdot P_{<K/4}u_2\cdot P_{\ge K/4}u_3\cdot v_{\sim K}dxdt\right|,
\end{align*}
applying \eqref{eq:quar 1+2} to each term we conclude \eqref{eq:bootstrap2}.
\end{proof}

\begin{proof}[Proof of Theorem \ref{thm:GWP}]
Let $s>0$ and $N\gg_s 1$. By \eqref{eq:trilinear}, \eqref{eq:U2V2}, and the expansion $|u|^2u-|v|^2v=(|u|^2+\overline{u}v)(u-v)+v^2\overline{(u-v)}$ we have
\begin{equation}\label{eq:difference}
\norm{\mc I(|u|^2u-|v|^2v)}_{Z_N}\les(\norm{u}_{Z_N}+\norm{v}_{Z_N})^2\norm{u-v}_{Z_N}.
\end{equation}
Based on \eqref{eq:difference}, we use the contraction mapping principle.
Let $B_N\subset H^s$ be the ball
\[
B_N:=\{u_0\in H^s:\norm{u_0}_{L^2}+N^{-s}\norm{u_0}_{H^s}\le2\delta\},
\]
and $X_N$ be the complete metric space
\[
X_N:=\{u\in C^0(I_N;H^s)\cap Y^s(I_N):\norm{u}_{Z_N}\le\eta\}
\]
equipped with the norm $Z_N$, where $\delta,\eta>0$ are universal constants to be fixed shortly.

By \eqref{eq:difference}, there exists $\eta>0$ such that the map
\[
u\mapsto\mc I(|u|^2u)
\]
is a contraction map on $X_N$ of Lipschitz constant $1/2$, which fixes $0$.

By \eqref{eq:free}, there exists $\delta>0$ such that 
\begin{equation}\label{eq:free<eta}
\norm{e^{it\De}\phi}_{Z_N}<\eta/4
\end{equation}
holds for every $\phi\in B_N$, so that the map
\[
u\mapsto e^{it\De}u_0\mp i \mc I(|u|^2u)
\]
is a contraction mapping on $X_N$. Thus for $u_0\in B_N$, there exists a solution $u$ to \eqref{eq:NLS} in $X_N$ on time interval $I_N$. Moreover, since the map $u\mapsto \mc I(|u|^2u)$ is a contraction map of Lipschitz constant $1/2$, given solutions $u,v\in X_N$ to $u_0,v_0\in B_N$, we have
\begin{align*}
\norm{u-v}_{Z_N}&\le\norm{e^{it\De}(u_0-v_0)}_{Z_N}+\norm{\mc I(|u|^2u)-\mc I(|v|^2v)}_{Z_N}\\
&\le\norm{e^{it\De}(u_0-v_0)}_{Z_N}+\frac12\norm{u-v}_{Z_N},
\end{align*}
which implies that the flow map $u_0\mapsto u\in X_N$ is Lipschitz continuous by \eqref{eq:free}.

We then check uniqueness. Let $u,v\in Y^s\cap C^0H^s$ be solutions to \eqref{eq:NLS} on a time interval $[0,T),T>0$, with common initial data $u_0$ such that $\norm{u_0}_{L^2}\le\delta$. There exists $N_0\gg_s 1$ such that $I_{N_0}\subset[0,T)$ and
\[
\norm{u_{>N_0}}_{Y^0}+N_0^{-s}\norm{u}_{Y^s}\le 2N_0^{-s}\norm{u}_{Y^s}\le\eta/2,
\]
\[
\norm{v_{>N_0}}_{Y^0}+N_0^{-s}\norm{v}_{Y^s}\le 2N_0^{-s}\norm{v}_{Y^s}\le\eta/2.
\]
We have
\begin{align*}
\norm{P_{\le N_0}(u-e^{it\De}u_0)}_{Y^0(I_N)}&\les\norm{P_{\le N_0}(|u|^2u)}_{L^1(I_N;L^2)}\\
&\les N_0\norm{|u|^2u}_{L^1(I_N;L^1)}\les N_0\norm{u}_{L^4(I_N;L^4)}^3,
\end{align*}
which shrinks to zero as $N\rightarrow\infty$ since $u\in L^4_{t,x}$ on $I_{N_0}$ by \eqref{eq:L^4 stri}. Thus, applying the same argument to $v$, by \eqref{eq:free<eta} there exists $N\ge N_0$ such that
\[
\chi_{I_N}u,\chi_{I_N}v\in X_N,
\]
which implies $u=v$ on $I_N$. Therefore, the maximal time $t_*\ge 0$ that $u= v$ on $[0,t_*]$ cannot be less than $T$, implying the uniqueness of solution to \eqref{eq:NLS}.

In summary, we proved uniform Lipschitz local well-posedness of \eqref{eq:NLS} mapping $B_N$ to $X_N$. It remains to show extend the lifespan over arbitrarily large time interval. For $N\gg_s 1,t_0\in\R$, and a solution $u\in Y^s$ to \eqref{eq:NLS} such that $u(t_0)\in B_N$ and $\norm{u(t_0)}_{L^2}\le\delta$, by \eqref{eq:Y^s>L infty} we have
\[
N^{-s}\norm{u(t_0+\frac1{2\log N})}_{H^s}\les\norm{u}_{Z_N}\le\eta.
\]
Moreover, since $u(t_0)$ is a limit of smooth data in $B_N$ and solutions to \eqref{eq:NLS} in $C^0H^2$ conserve their $L^2$-norms, we have
\[
\norm{u(t_0+\frac1{2\log N})}_{L^2}=\norm{u(t_0)}_{L^2}\le\delta.
\]
Thus, there exists a constant $K\in2^\N$ such that $u(t_0+\frac{1}{2\log N})\in B_{KN}$.

Let $u_0\in H^s$ be any function that $\norm{u_0}_{L^2}\le\delta$. Let $N_0\gg_s1$ be a dyadic number such that $u_0\in B_{N_0}$. For $j\in\N$, let
\[
N_j:=K^jN_0\text{ and }T_j:=\sum_{k=0}^{j-1}\frac{1}{2\log N_k}.
\]
We extend the solution inductively. For $j\in\N$, we can extend the solution $u\in Y^s$ to \eqref{eq:NLS} on $[0,T_j]$ to $[0,T_{j+1}]$ with $u(T_{j+1})\in B_{N_{j+1}}$. Since $\lim_{j\rightarrow\infty}T_j=\infty$ the lifespan of $u$ is infinite.
\end{proof}
\section*{Acknowledgement}
The authors are grateful to Ciprian Demeter for kindly pointing out an error in the first preprint version of this paper. In addition, the authors thank Ciprian Demeter and Po-Lam Yung for carefully reading Sections 1-3 and a number of remarks which helped us to improve the exposition. Also, we thank the anonymous referees, in particular for their detailed suggestions concerning the exposition in Section 4.

Funded by the Deutsche Forschungsgemeinschaft (DFG, German Research Foundation) -- IRTG 2235 -- Project-ID 282638148
\bibliographystyle{amsplain}
\bibliography{citationforTd}

\providecommand{\bysame}{\leavevmode\hbox to3em{\hrulefill}\thinspace}
\providecommand{\MR}{\relax\ifhmode\unskip\space\fi MR }
\providecommand{\MRhref}[2]{%
  \href{http://www.ams.org/mathscinet-getitem?mr=#1}{#2}
}
\providecommand{\href}[2]{#2}
\begin{thebibliography}{10}

\bibitem{bourgain1993fourier}
Jean Bourgain, \emph{Fourier transform restriction phenomena for certain
  lattice subsets and applications to nonlinear evolution equations. {I}.
  {S}chr\"{o}dinger equations}, Geom. Funct. Anal. \textbf{3} (1993), no.~2,
  107--156. \MR{1209299}

\bibitem{bourgain2015proof}
Jean Bourgain and Ciprian Demeter, \emph{The proof of the {$l^2$} decoupling
  conjecture}, Ann. of Math. (2) \textbf{182} (2015), no.~1, 351--389.
  \MR{3374964}

\bibitem{iteam-wt}
James Colliander, Markus Keel, Gigliola Staffilani, Hideo Takaoka, and Terence
  Tao, \emph{Transfer of energy to high frequencies in the cubic defocusing
  nonlinear {S}chr\"{o}dinger equation}, Invent. Math. \textbf{181} (2010),
  no.~1, 39--113. \MR{2651381}

\bibitem{de2007global}
Daniela De~Silva, Nata\v{s}a Pavlovi\'{c}, Gigliola Staffilani, and Nikolaos
  Tzirakis, \emph{Global well-posedness for a periodic nonlinear
  {S}chr\"{o}dinger equation in 1{D} and 2{D}}, Discrete Contin. Dyn. Syst.
  \textbf{19} (2007), no.~1, 37--65. \MR{2318273}

\bibitem{fan2018bilinear}
Chenjie Fan, Gigliola Staffilani, Hong Wang, and Bobby Wilson, \emph{On a
  bilinear {S}trichartz estimate on irrational tori}, Anal. PDE \textbf{11}
  (2018), no.~4, 919--944. \MR{3749372}

\bibitem{guo-li-yung}
Shaoming Guo, Zane Li, and Po-Lam Yung, \emph{Improved discrete restriction for
  the parabola}, arXiv:2103.09795, to appear in Math. Res. Lett. (2021).

\bibitem{guth2020improved}
Larry Guth, Dominique Maldague, and Hong Wang, \emph{Improved decoupling for
  the parabola}, arXiv2009.07953, to appear in JEMS (2020).

\bibitem{hadac2009well}
Martin Hadac, Sebastian Herr, and Herbert Koch, \emph{Well-posedness and
  scattering for the {KP}-{II} equation in a critical space}, Ann. Inst. H.
  Poincar\'{e} C Anal. Non Lin\'{e}aire \textbf{26} (2009), no.~3, 917--941.
  \MR{2526409}

\bibitem{hadac2010erratum}
\bysame, \emph{Erratum to ``{W}ell-posedness and scattering for the {KP}-{II}
  equation in a critical space'' [{A}nn. {I}. {H}. {P}oincar\'{e}---{AN} 26 (3)
  (2009) 917--941] [mr2526409]}, Ann. Inst. H. Poincar\'{e} C Anal. Non
  Lin\'{e}aire \textbf{27} (2010), no.~3, 971--972. \MR{2629889}

\bibitem{herr2011global}
Sebastian Herr, Daniel Tataru, and Nikolay Tzvetkov, \emph{Global
  well-posedness of the energy-critical nonlinear {S}chr\"{o}dinger equation
  with small initial data in {$H^1(\mathbb T^3)$}}, Duke Math. J. \textbf{159}
  (2011), no.~2, 329--349. \MR{2824485}

\bibitem{kishimoto2014remark}
Nobu Kishimoto, \emph{Remark on the periodic mass critical nonlinear
  {S}chr\"{o}dinger equation}, Proc. Amer. Math. Soc. \textbf{142} (2014),
  no.~8, 2649--2660. \MR{3209321}

\bibitem{koch2014dispersive}
Herbert Koch, Daniel Tataru, and Monica Vi\c{s}an, \emph{Dispersive equations
  and nonlinear waves}, Oberwolfach Seminars, vol.~45, Birkh\"{a}user/Springer,
  Basel, 2014. \MR{3618884}

\bibitem{pach1992repeated}
J\'{a}nos Pach and Micha Sharir, \emph{Repeated angles in the plane and related
  problems}, J. Combin. Theory Ser. A \textbf{59} (1992), no.~1, 12--22.
  \MR{1141318}

\bibitem{schippa2023improved}
Robert Schippa, \emph{Improved global well-posedness for mass-critical
  nonlinear {S}chr\"{o}dinger equations on tori}, arXiv:2312.16527 (2023).

\bibitem{takaoka20012d}
Hideo Takaoka and Nikolay Tzvetkov, \emph{On 2{D} nonlinear {S}chr\"{o}dinger
  equations with data on {${\mathbb R}\times\mathbb T$}}, J. Funct. Anal.
  \textbf{182} (2001), no.~2, 427--442. \MR{1828800}

\bibitem{tao2006additive}
Terence Tao and Van~H. Vu, \emph{Additive combinatorics}, Cambridge Studies in
  Advanced Mathematics, vol. 105, Cambridge University Press, Cambridge, 2010.
  \MR{2573797}

\end{thebibliography}
\end{document}